\documentclass[reqno,a4paper]{amsart}
\usepackage{amsmath,amsthm,amssymb,tabularx}

\theoremstyle{plain}
\newtheorem{thm}{Theorem}[section]
\newtheorem{lm}[thm]{Lemma}
\newtheorem{cor}[thm]{Corollary}
\newtheorem{prop}[thm]{Proposition}

\theoremstyle{definition}
\newtheorem{definition}[thm]{Definition}
\newtheorem{defn}[thm]{Definition}

\newtheorem{ex}[thm]{Example}

\newtheorem{rem}[thm]{Remark}
\newtheorem{nots}[thm]{Notations}
\allowdisplaybreaks

\newcommand{\beq}{\begin{equation}}
\newcommand{\eeq}{\end{equation}}
\newcommand{\bga}{\begin{gather*}}
\newcommand{\ega}{\end{gather*}}
\newcommand{\bal}{\begin{align*}}
\newcommand{\eal}{\end{align*}}
\newcommand{\bit}{\begin{itemize}}
\newcommand{\eit}{\end{itemize}}
\newcommand{\btm}{\begin{tm}}
\newcommand{\etm}{\end{tm}}
\newcommand{\blm}{\begin{lm}}
\newcommand{\elm}{\end{lm}}
\newcommand{\bcor}{\begin{cor}}
\newcommand{\ecor}{\end{cor}}
\newcommand{\bprop}{\begin{prop}}
\newcommand{\eprop}{\end{prop}}
\newcommand{\bex}{\begin{ex}}
\newcommand{\eex}{\end{ex}}
\newcommand{\bpr}{\begin{proof}}
\newcommand{\epr}{\end{proof}}
\newcommand{\brem}{\begin{rem}}
\newcommand{\erem}{\end{rem}}
\newcommand{\bdf}{\begin{definition}}
\newcommand{\edf}{\end{definition}}
\newcommand{\bnots}{\begin{nots}}
\newcommand{\enots}{\end{nots}}

\def\C{\mathbb{C}}
\def\N{\mathbb{N}}
\def\R{\mathbb{R}}

\def\Id{\mathbb I}
\def\D{{\mathbb D}}

\def\e{\varepsilon}
\let\a\alpha
\let\b\beta
\let\g\gamma

\let\cal\mathcal

\def \le {\leqslant}
\def \ge {\geqslant}
\let \<\langle
\let \>\rangle
\let\phi\varphi
\let\kappa\varkappa

\let\cal\mathcal

\def\otwo{\underset2\otimes}

\def\G{\mathbb{G}}

\textwidth=15.5cm
\textheight=23cm
\voffset=-1cm
\hoffset-1cm
\begin{document}
\author{Yulia Kuznetsova}
\title[Invariant means on quantum WAP functionals]{Invariant means on subspaces of quantum weakly almost periodic functionals}
\address{University of Bourgogne Franche-Comt\'e, 16 route de Gray, 25030 Besan\c con, France}
\email{yulia.kuznetsova@univ-fcomte.fr}
\subjclass[2020]{
43A07; 
46L52; 
43A60; 
22D25; 
47L25; 
47L25 
}
\keywords{quantum semigroups; locally compact quantum groups; invariant mean; operator spaces}

\begin{abstract}
Let $M$ be a Hopf--von Neuman algebra with the predual $M_*$ and $WAP(M)$ the subspace in $M$ composed of weakly almost periodic functionals on $M_*$. The main example of such an algebra is $M=L^\infty(\mathbb G)$ for a locally compact quantum group $\mathbb G$. We define a pair of left/right spaces $WAP_{iso,l}(M)$ and $WAP_{iso,r}(M)$ inside $WAP(M)$ and prove that they carry invariant means. These spaces are currently the widest known to admit invariant means in the quantum setting. In the case when $M=L^\infty(G)$ and $G$ is a locally compact group, these spaces are equal to $WAP(G)$.
\end{abstract}

\maketitle


\section{Introduction}

The notion of a weakly almost periodic function is stated naturally in the context of semitopological semigroups, though it appeared first in the group case. A semigroup $S$ is called semitopological if it is endowed with a topology such that left and right multiplication maps $l_s:t\mapsto st$, $r_s: t\mapsto ts$ are continuous on $S$ for every $s\in S$. This condition is strictly weaker than the joint continuity of multiplication. Let $C_b(S)$ be the space of continuous bounded functions on $S$. The semigroup acts on it by translations: $L_s(f)=f\circ l_s$, $R_s(f)=f\circ r_s$ for $f\in C_b(S)$, $s\in S$. A function $f\in C_b(S)$ is {\sl weakly almost periodic} if its orbit $Orb_S(f)=\{ L_s(f): s\in S\}$ is relatively weakly compact in $C_b(S)$. Considering the right orbit instead leads to the same class of functions.

In the case when $S=G$ is a locally compact group, this allows also a linearized formulation. In fact \cite{kitchen,dunkl-ramirez}, a function $f$ is weakly almost periodic exactly when the map $\mu\mapsto \mu*f$, equivalently $\mu\mapsto f*\mu$, is weakly compact from $L^1(G)$ to $C_b(G)$. This motivated the following, more general definition \cite{lau-loy}. If $A$ is a Banach algebra and $A^*$ its dual, then $A^*$ is a module over $A$: for $\phi\in A^*$ and $a\in A$, we define $a.\phi\in A^*$ by $(a.\phi)(b)=\phi(ba)$, and respectively $(\phi.a)(b) = \phi(ab)$, $b\in A$. Now $\phi$ is weakly almost periodic if the map $a\mapsto a.\phi$, $A\to A^*$, is weakly compact (the image of the unit ball of $A$ is relatively weakly compact). In this paper, we denote by $WAP(A^*)$ the space of weakly almost periodic functionals on $A$ (and not $WAP(A)$, because the focus will be on the dual). 

Our context is between the two. The main object is a Hopf--von Neumann algebra $M$, that is, a von Neumann algebra with a comultiplication (a normal unital coassociative *-homomorphism $\Delta: M\to M\bar\otimes M$). In this case, the predual $M_*$ is a Banach algebra with the multiplication $(\mu*\nu)(x) = (\mu\otimes\nu)\big(\Delta(x)\big)$, $\mu,\nu\in M_*$, $x\in M$. The space $WAP(M)$ is thus well defined.
The principal example is the space $WAP(\G)$ on a locally compact {\sl quantum} group $\G$, corresponding to the algebra $A=L^1(\G)$ with the dual $M=A^*=L^\infty(\G)$.
If $G$ is a locally compact group, the space is the same as the usual space $WAP(G)$.

Despite the naturality of this definition, the space $WAP(M)$ appears to be difficult to work with, as shown by V.~Runde \cite{runde-wap,runde-factor}: while being self-adjoint, it might not be an algebra; also, there is no clue as to whether it carries invariant means, even in the case of $M=L^\infty(\G)$ for a locally compact quantum group $\G$. An invariant mean here is a positive linear functional $m$ which is invariant under left or right module action of $M_*$.

This motivated M.~Daws \cite{daws-wap} to introduce a more restricted space $wap(M)$ which is a $C^*$-algebra and is contained in $WAP(M)$ (in fact, its maximal $C^*$-subalgebra). There are however no results on invariant means on this space. The largest space known to admit an invariant mean is the Eberlein algebra \cite{daws-das} which is the closure of the space of coefficients of unitary representations of $M_*$. In the commutative case, this algebra is the uniform closure of the Fourier--Stieltjes algebra $B(G)$.

The present work addresses this question by proposing a pair of other, smaller spaces contained in $WAP(M)$ which do carry left and right invariant means respectively. The definition is based on yet another description of the space $WAP(S)$ in the case of a semitopological semigroup $S$, the reflexive representation theorem of M.~Megrelishvili \cite[Theorem 4.6]{megrelishvili}: 
a function $f\in C_b(S)$ is weakly almost periodic if and only if it is a coefficient of a contractive representation of $S$ on a reflexive Banach space. (A more precise citation is given in Section 2.)

In the case of a Hopf--von Neumann algebra $M$, a special r\^ole is player by the unit ball $S$ of $M_*$ which is a  semigroup. Every $x\in M$ defines a function $f_x\in C_b(S)$ by the natural identification $f_x(\mu) = \mu(x)$. One can check that $x\in WAP(M)$ if and only if $f_x\in WAP(S)$. The theorem of Megrelishvili gives therefore a description of $WAP(M)$ as the space of coefficients of contractive reflexive representations of $M_*$. 

Inside $WAP(M)$, we distinguish the subspace of coefficients of so called $\ell_2$-bounded representations (Definition \ref{def-l2-bd}). This is a non-Hilbert analogue of complete boundedness. Further, it contains the space of coefficients of representations having an isometric generator (Definitions \ref{def-generator} and \ref{def-wap-cb-iso}), and it turns out that one has to consider left and right versions of this space, that is, a pair of spaces $WAP_{iso,l}(M)$ and $WAP_{iso,r}(M)$. 

The main result is the existence of invariant means on these spaces. In Section 3, we show that the space $WAP_{iso,l}(M)$ carries a right invariant mean, and on the space $WAP_{iso,r}(M)$, there is a left invariant mean. In general, one cannot say more; but if the comultiplication of $M$ is nontrivial (Definition \ref{def-nontrivial}), each of these means is both left and right invariant.
This is the case in particular if $M=L^\infty(\G)$ where $\G$ is a locally compact quantum group. The spaces in question are so far the largest known to admit invariant means in the quantum context. 


In the case of a classical locally compact group $G$ and $M=L^\infty(G)$, we have $WAP(G) = WAP_{iso}(M)$ (left and right spaces are the same). This is shown in Section 4.


\smallskip

{\bf Acknowledgements.}
This paper would not appear without discussions with Biswarup Das on the importance of invariant means. Our joint work on a close topic will appear elsewhere.

The author is supported by the ANR-19-CE40-0002 grant of the French National Research Agency (ANR). The Laboratory of Mathematics of Besan\c con receives support from the EIPHI Graduate School (contract ANR-17-EURE-0002).

\section{Completely bounded representations and the space $WAP_{iso}(M)$}

From now on, we fix a Hopf--von Neumann algebra $M$, with its comultiplication $\Delta$. In $M_*$, the unit ball $S=\{\mu\in M_*: \|\mu\|\le1\}$ and the set of normal states $P=\{\mu\in S: \mu(1)=1\}$ are semigroups. We consider them with the weak topology, in which they are botn semitopological semigroups.

For any subspace $N\subset M$, $N^{sa}$ is the subset of self-adjoint elements in it. $M$ is called {\it coamenable} if $M_*$ has a bounded approximate identity.

If $V$ is a Banach space, then $V_1$ denotes its unit ball and $B(V)$ the space of bounded linear operators on $V$. For $\eta\in V^*$ and $v\in V$, we write the value of $\eta$ on $v$ as $\langle v,\eta\rangle$.

We also use notations introduced in the Introduction.

\subsection{Correspondence between $WAP(M)$ and $WAP(S)$}

Every $x\in M$ defines a continuous function $f_x$ on~$S$, $f_x(\mu)=\mu(x)$. The set $F=\{f_x: x\in M\}$ is exactly the space of affine functions on $S$ vanishing at 0, and the map $J: x\mapsto f_x$ is isometric and a homeomorphism with respect to the weak topology on $C_b(S)$ and the weak topology in $M$.

The translates of $f_x$ are related to the module action of $M_*$ as $L_\mu(f_x) = f_{x.\mu}$, $R_\mu(f_x) = f_{\mu.x}$, $\mu\in M_*$, so that the orbits $S.x=\{ \mu.x: \mu\in S\}$ and $J(S.x) = Orb_S(f_x)$ are simultaneously relatively weakly compact or not in $M$ and $C_b(S)$ respectively.
It follows that $x\in WAP(M)$ if and only if $f_x\in WAP(S)$. Of course, we do not obtain in this way all weakly almost periodic functions on $S$, but only affine ones vanishing at 0.

It is known \cite{young, daws-reflexive} that $WAP(M)$ is the space of coefficients of representations of $M_*$ on reflexive Banach spaces. A subtlety here is however that this is valid in the case when $W_*$ is unital (at least with a bounded approximate identity), or otherwise requires adding an external unit. This is not essential for the space $WAP(M)$ by itself, but when speaking of ``isometric'' representations (our space $WAP_{iso}(M)$ defined in Section \ref{sec-wap-iso}), the space can change significantly, as shown in Example \ref{trivial-comult}. For this reason we adhere to another, more general description of the space $WAP(M)$ as coefficients of covariant pairs of a representation and a cocycle, given below.

We need an adaptation to our special case of the reflexive representation theorem of Megrelishvili \cite[Theorem 4.6]{megrelishvili}. Apart from the fact that we do not need its full generality, there are two points to change: our spaces are complex, and we should arrive at linear maps $\pi$ and $\xi$. Since the proof is not long, it is easier to rewrite it in full making necessary modifications on the way, rather than commenting on the original one.
 
\begin{thm}\label{left-cov}
Let $M$ be a Hopf--von Neumann algebra and $S = \{ s\in M_*: \|s\|\le1 \}$. For every $x\in WAP(S)$ there exists a reflexive Banach space $V$, a contractive homomorphism $\pi$ from $M_*$ to the space of bounded operators on $V$, and a contractive linear 
map $\xi$ from $M_*$ to $V$ such that the pair $(\pi,\xi)$ is left covariant: $\xi(st)=\pi(s)\xi(t)$, and $x(t) = \langle \xi(t),\eta\rangle$, $t\in S$, with some $\eta\in V^*$.
\end{thm}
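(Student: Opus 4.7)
The plan is to follow the standard DFJP (Davis--Figiel--Johnson--Pe\l czy\'nski) interpolation argument, with the twist that one first adjoins an identity to $M_*$ in order to make a coefficient-recovering functional $\eta$ available. Since the required identity $x(t)=\<\xi(t),\eta\>$ forces $t\mapsto x(t)$ to be linear on $S$, I interpret $x$ as an element of $WAP(M)$ via the isometric correspondence $J$, so that $x(t)=t(x)$ for $t\in M_*$. Form the unitized Banach algebra $\t M_*=M_*\oplus\C\cdot e$ with $\|s+\lambda e\|=\|s\|+|\lambda|$, and extend $x$ to $\t x\in\t M:=(\t M_*)^*$ by $\t x(s+\lambda e)=x(s)$; note that $\t M\cong M\oplus\C$ as Banach spaces, with $\t x$ corresponding to $(x,0)$.

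As DFJP input I take the orbit $\Omega=\{t.\t x:t\in \t S\}$, where $\t S$ is the unit ball of $\t M_*$. A direct computation gives $s.\t x=(s.x,\,x(s))$ for $s\in M_*$ and $e.\t x=\t x=(x,0)$; hence the projection of $\Omega$ to $M$ sits inside $\{s.x:s\in S\}\cup\{x\}$, which is weakly relatively compact by the hypothesis $x\in WAP(M)$, while the projection to $\C$ is norm-bounded. Thus $\Omega$ is weakly relatively compact in $\t M$, and so is its weakly closed absolutely convex hull $W$ by Krein--\v{S}mulian. The DFJP construction applied to $W\subset\t M$ then produces a reflexive Banach space $V$ equipped with a contractive linear inclusion $V\hookrightarrow\t M$ such that $W\subset V_1$, and with the property that every contraction of $\t M$ preserving $W$ restricts to a contraction on $V$.

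The module-action operator $L_t:y\mapsto t.y$ on $\t M$ is a contraction when $t\in\t S$ and preserves $W$, since $t.(s.\t x)=(ts).\t x\in \Omega$; it therefore restricts to a contraction on $V$. Extending linearly in $t$ yields a contractive homomorphism $\pi:\t M_*\to B(V)$, and its restriction to $M_*$ is the desired $\pi$. Put $\xi(t)=t.\t x\in V$ for $t\in M_*$: this is linear and contractive, and the covariance $\xi(st)=s.(t.\t x)=\pi(s)\xi(t)$ is immediate from associativity of the module action. Finally, the evaluation $\mathrm{ev}_e:\t M\to\C$, $z\mapsto z(e)$, is a norm-one functional, and its restriction to $V$ through the DFJP inclusion provides $\eta\in V^*$ with $\eta(\xi(t))=(t.\t x)(e)=\t x(et)=\t x(t)=x(t)$ for $t\in S$.

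The principal conceptual obstacle is the non-unitality of $M_*$: working directly inside $M$ alone, one has no natural way to produce $\eta$ recovering $x$ from the orbit $\{t.x\}$, because this would demand the implication $t.x=0\Rightarrow t(x)=0$, which holds under a bounded approximate identity but not in general. The adjunction of the external unit places the missing ``identity evaluation'' inside the enlarged predual, after which the proof is routine bookkeeping on top of the standard DFJP lemma. The remaining deviations from Megrelishvili's original argument---complex scalars and linearity in the Banach-algebra variable rather than mere semigroup-multiplicativity---are automatic here, since both $t\mapsto t.\t x$ and $t\mapsto L_t$ depend linearly on $t$.
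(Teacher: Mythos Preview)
Your argument is correct and arrives at the same covariant pair, but the route differs from the paper's in two respects.

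First, the handling of non-unitality. The paper never leaves $M$: it enlarges the right orbit to $C_x=\operatorname{conv}\big(\overline{x.S}\cup\D\{x\}\big)$, pushes this into $C_b(S)$ via $J$, and runs DFJP there; the vector $\eta$ is then simply $J(x)$ lying inside the DFJP space, while $\xi$ is built dually from point-evaluations, and a final swap $V\leftrightarrow V^*$ puts $\xi$ into $V$. You instead adjoin an external unit to $M_*$, run DFJP on the orbit of $\t x$ inside $\t M$, put $\xi(t)=t.\t x$ directly into the DFJP space, and recover $\eta$ as evaluation at $e$. Your construction is more streamlined (no passage through $C_b(S)$, no swap), while the paper's version keeps explicit control of the DFJP constant and immediately yields the equivalence $\|x\|\le\|x\|_{coe\!f}\le3\|x\|$ used later. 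The paper in fact mentions the unitization device just before the theorem and explains why it prefers the covariant-pair formulation; your proof is compatible with that formulation since the output is a covariant pair on $M_*$, the unitization being only an auxiliary step.

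Second, a small imprecision: for $t=s+\lambda e\in\t S$ one has $t.\t x=(s.x+\lambda x,\,x(s))$, so the $M$-projection of $\Omega$ lies in $\operatorname{conv}\big((S.x)\cup\D\{x\}\big)$ rather than literally in $\{s.x:s\in S\}\cup\{x\}$. This does not affect the argument, as the convex hull of a weakly compact set and a point is again weakly relatively compact; incidentally, this is exactly the set $C_x$ the paper starts from.
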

\begin{proof}
Denote $\D=\{\lambda\in \C:|\lambda|\le1\}$. Recall that a subset $A$ of a complex vector space is called circled if $\D a\subset A$ for every $a\in A$.

Fix $x\in M$. The weak closure $\overline{x.S}$ of $x.S$ is weakly compact, convex and circled. Next, the convex hull $C_x$ of $\overline{x.S}\cup \D\{x\}$ is weakly compact again (in a Hausdorff topological vector space, the convex hull of a finite family of compact convex sets is compact \cite[II.10.2]{schaefer}). Set $W=J(C_x)$; it is weakly compact as the continuous image of a weak compact. Let $E$ be the (norm) closed linear span of $W$ in $C_b(S)$. $W$ is also weakly compact in $E$ in its weak topology, which is the same as induced by the weak topology of $C_b(S)$. $W$ is also convex and circled, so the Davies--Figiel--Johnson--Pelczynski \cite{DFJP} factorisation procedure applies to it: for $n\in\N$, set $U_n=2^nW+2^{-n}E_1$ and let $\|\cdot\|_n$ be the Minkowski functional of $U_n$: $\|f\|_n = \inf\{\lambda>0: f\in \lambda U_n\}$. Define next for $f\in E$
$$
N(f) = \Big( \sum_{n=1}^\infty \|f\|_n^2\Big)^{1/2}
$$
and $V = \{ f\in E: N(f)<\infty\}$. By \cite{DFJP}, $V$ is a reflexive Banach space such that $W\subset V_1$, and $V$ is continuously embedded into $E$, so that there exists a constant $C$ such that $\|f\|\le CN(f)$ for $f\in V$. This constant can be estimated more exactly: we have $2W+\frac12 E_1\subset (2\|x\|+\frac12)E_1$ so that $\|f\|\le (2\|x\|+\frac12) \|f\|_1$ and $C \le 2\|x\|+\frac12$.

In the notations used already in the introduction, $S$ acts on $C_b(S)$ by left translations: $L_sf(t)=f(st)$ for $s,t\in S$, $f\in C_b(S)$ (note that $L_{st}=L_tL_s$). 
This action is weakly continuous, and
$L_s J(x.t) = J(x.(ts)) \in J(x.S)$. Also, $L_s(Jx) = J(x.s)$. Thus, $W$ is invariant under every $L_s$. The same is true for $E_1$, as $L_s$ is contractive. Now, $L_sU_n\subset U_n$ for every $n$, so $f\in \lambda U_n$ implies $L_sf\in\lambda U_n$ whence $\|L_sf\|_n\le \|f\|_n$. It follows that $L_s$ is contractive on $V$. We set $\pi(s)=L_s^*$, acting on $V^*$.

The semigroup $S$ is embedded into $V^*$ as $\xi(s): f\mapsto f(s)$, with the bound
$|\xi(s)(f)| = |f(s)| \le \|f\|_\infty = \|f\|_E \le C N(f)$. It follows that $\xi(S)\subset C (V^*)_1$. For $f\in V$ and $s,t\in S$, $\langle f,\xi(st)\rangle = f(st) = L_sf(t) = (\pi(s)^*f)(t) = \langle \pi(s)^*f, \xi(t)\rangle = \langle f, \pi(s)\xi(t)\rangle$, so that $\xi(st) = \pi(s)\xi(t)$.

We have $s(x) = (Jx)(s) = \langle \xi(s),Jx\rangle = \langle \xi(s),\eta\rangle$ for every $s\in S$.
Every function in $J(M)$ is affine on $S$ with $Jf(0)=0$. The same is true by continuity for every $f\in E$. For every $f\in V$, we have thus $\langle \xi(s),f\rangle = f(s)$ is affine in $s$ and $f(\xi(0))=0$. It follows that $\xi$ is affine on $S$ and vanishes at 0.
 It follows that $\xi$ extends to a linear (bounded) map from $M_*$ to $V^*$. Similarly, $\pi$ extends to a linear map on $M_*$. It is clearly still a homomorphism.
By construction, $V\subset E\subset J(M)$; this shows that $\xi:M_*\to V^*$ is weakly continuous.

Finally, set $\eta = J(x)\in W\subset V$.  Note that $\|\eta\|\le1$ as $W\subset V_1$, and $\|\xi(s)\|\le C \|s\|$ where $C\le 2\|x\|+\frac12$. 
The statement of the theorem follows after replacing $(\xi,\eta)$ with $(\xi/C,C\eta)$ and then exchanging $V$ and $V^*$.
\end{proof}

\begin{defn}
A {\it left covariant representation} of $M_*$ on a Banach space $V$ is a pair $(\pi,\xi)$ where $\pi$ is a contractive representation of $M_*$ on $V$, $\xi:M_*\to V$ is a contractive linear map, and $\xi(\mu\nu) = \pi(\mu)\xi(\nu)$ for every $\mu,\nu \in M_*$. A {\it right covariant representation} is defined in the same way except that $\xi(\mu\nu) = \pi(\nu)\xi(\mu)$ for every $\mu,\nu \in M_*$.
A {\it coefficient} of $(\pi,\xi)$ is any functional on $M_*$ of the form $\mu\mapsto \langle \xi(\mu),\eta\rangle$, with some $\eta\in V^*$. We identify it with the corresponding  element of $M$.
\end{defn}

The reasoning above, with its obvious converse, shows that $WAP(M)$ is the space of coefficients of all left covariant representations of $M_*$ on reflexive Banach spaces.

One can prove also the right version of Theorem \ref{left-cov} and show that every $x\in WAP(M)$ is also a coefficient of a right covariant representation of $M_*$ on a reflexive Banach space. For this, one starts with the orbit $S.x$ instead of $x.S$, next considers right translations on $C_b(S)$, $R_sf(t) = f(ts)$, having this time $R_{st}=R_sR_t$ and $R_sJx=J(s.x)$. The construction of $V$ and the inclusion $\xi: S\to V^*$ are the same, while $\pi(s)=R_s^*$. One verifies finally that $\xi(st) = \pi(t)\xi(s)$ for all $s,t\in S$.

If $M$ coamenable then there is no difference between left and right covariant pairs and one can assume $\xi$ to be constant. To see this, suppose that $a_i\in M_*$ is a bounded approximate identity, and let $(\pi,\xi)$ be a left covariant representation. The net $(\xi(a_i))$ is contained in $V_1$ which is weakly compact; let $(\xi(a_{i_j}))$ be a subnet weakly converging to $\xi_0\in V_1$. For every $\mu\in M_*$, we have $\mu a_{i_j}\to \mu$, in norm and weakly. By the weak continuity of $\xi$, then
$$
\xi(\mu) = \lim \xi(\mu a_{i_j}) = \lim \pi(\mu) \xi(a_{i_j}) = \pi(\mu)\xi_0.
$$
The same formula is proved in a similar way for a right covariant representation.

Let us now discuss the constants appearing in the proof. Writing $\|\xi\|=\sup_{s\in S} \|\xi(s)\|$, we always have $\|x\|\le \|\xi\| \|\eta\|$ if $x(\mu) = \langle \xi(s),\eta\rangle$, be the pair $(\xi,\eta)$ realized by the  construction above or not. In the theorem, before renorming we have $\|\eta\|\le1$ and $\|\xi\|\le C\le 2\|x\|+1/2$, so that $\|x\|=1$ implies $\|\xi\| \|\eta\| \le 3\|x\|$. We can obtain moreover the last inequality for any $x\in WAP(M)$ by applying the construction to $x/\|x\|$ (and then multiplying $\eta$ by $\|x\|$). Finally, if we set
$$
\|x\|_{coe\!f} = \inf \{ \|\xi\|\,\|\eta\|: \ x= \langle \xi(s),\eta\rangle \}
$$
where the infimum is taken over all $\xi,\eta$ appearing in left covariant representations of $M_*$ on reflexive Banach spaces, then we have:
$$
\frac13 \|x\|_{coe\!f} \le \|x\| \le \|x\|_{coe\!f}.
$$
This might seem unexpected as compared to the norm of the Fourier or Fourier-Stieltjes algebra of a locally compact group $G$ which is defined in a similar way; however this reflects the fact that the space $WAP(G)$ is always uniformly closed.

This is a norm indeed; the triangular inequality follows from the fact that if $x,y\in WAP(M)$ and $(V_x,\pi_x,\xi_x,\eta_x)$,  $(V_y,\pi_y,\xi_y,\eta_y)$ are associated to them, then to $x+y$ one can associate $(V_x\oplus_2 V_y,\pi_x\oplus\pi_y, \xi_x\oplus\xi_y,\eta_x\oplus\eta_y)$, the $\ell_2$-direct sum $V_x\oplus_2 V_y$ being reflexive. Considering right covariant representations, we arrive at the same equivalence.

\def\d{\mathfrak d}

\subsection{$\ell_2$-bounded representations of $M_*$}

The theory of operator spaces \cite{effros-ruan} has proved itself very meaningful in noncommutative harmonic analysis, starting with the theorem of Ruan on the equivalence of the amenability of a locally compact group $G$ to the operator space amenability of its Fourier algebra $A(G)$ \cite{ruan}. One can turn to the survey of Runde \cite{runde-survey} for more on this topic.

An important point is however that we are working with non-Hilbert Banach spaces. Recall that the theory of operator spaces begins with the notion of matrix norms which exist naturally on the space $B(H)$ of bounded operators on a Hilbert space $H$. If the space is non-Hilbert, there is no natural operator space structure on $B(V)$. (In the prepublication version, we collect in the Appendix some calculations showing why several possible definitions fail.) However, certains elements of the theory of operator spaces do work in this context, and we will use them.

\begin{nots}
Let $V$ be a Banach space and $H$ a Hilbert space of dimension $\mathfrak d$ with a basis $\{e_\a\}_{\a\in\mathfrak d}$. We consider the space $\ell_{2,\d}(V)$ as a tensor product:
$$
\ell_{2,\d}(V)  = \big\{ \sum_{\a\in\mathfrak d} e_\a\otimes v_\a: v_\a\in V, \ \sum_\a \|v_\a\|^2<\infty \big\}
$$
with the norm $\|e_\a\otimes v_\a\| = \big(\sum_\a \|v_\a\|^2)^{1/2}$. We denote it also as $H\otwo V$. The notation $x\otimes v$ has an obvious sense for $x\in H$, $v\in V$. If $V$ is a Hilbert space, we get the usual Hilbert space tensor product.

Denote by $\bar H$ the antilinear copy of $H$, and let $h\mapsto \bar h$ be the canonical map from $H$ to $\bar H$. We allow ourselves to denote in the same way the inverse map from $\bar H$ to $H$. If $V$ is reflexive, $H\otwo V$ is also reflexive with the dual $\bar H\otwo V^*$, and note that the duality is linear in both arguments. We will use sometimes the original scalar product on $H$, and if we do, we denote it as $\langle h,k\rangle_H$. For $h,k\in H$, $\xi\in V$, $\eta\in V^*$ we have thus $\langle h\otimes \xi, k\otimes \eta\rangle = \langle h,\bar k\rangle_H \langle \xi,\eta\rangle$. In general, it will be natural to denote vectors in $\bar H\otwo V^*$ as $\bar h\otimes \eta$, $h\in H$, $\eta\in V^*$.

From now on, we suppose that $M$ is a Hopf--von Neumann algebra acting on a Hilbert space $H$ with a basis $\{e_\a\}_{\a\in\mathfrak A}$. For $h,k\in H$, we denote by $\mu_{kh}\in M_*$ the vector functional $\mu_{kh}(x)=\langle xh,k\rangle_H$, $x\in M$.
\end{nots}

It is known that completely bounded represenations of $M_*$ on Hilbert spaces are characterized by their {\it generators} (the definition is identical to the following one), and in the case of quantum groups, often generators themselves are called (co-)representations:

\begin{defn}\label{def-generator}
Let $\pi:M_*\to B(V)$ be a linear map. We say that $\pi$ has a {\it generator} $U\in B(H\otwo V)$ if for every $h,k\in H$, $v\in V$, $\eta\in V^*$
\begin{equation}\label{U-pi}
\langle U(h\otimes v), \bar k\otimes \eta\rangle = \langle \pi(\mu_{k h}) v,\eta\rangle.
\end{equation}
\end{defn}
One can choose different realizations of $M$ as an algebra of operators on a Hilbert space, and this would lead to different generators. It is known that in the case then $V$ is a Hilbert space, $\pi$ is completely bounded if and only if it has a generator. (Note that we assume nothing more than boundedness of $U$.)

In the non-Hilbert case, this notion is also very useful, and our next step it to establish a sufficient condition for the existence of a generator.
 
Let $n$ be a natural number, and let $\ell_{2,n}$ denote the space $\C^n$ with the euclidean norm. The space of matrices $M_n(B(V))$ is linearly isomorphic to the space $B(\ell_{2,n}\otwo V)$, with the isomorphism $\phi_n: M_n(B(V))\to B(\ell_{2,n}\otwo V)$ given by
\begin{equation} \label{phin}
\phi_n(u)(\xi) = \sum_{k,j=1}^n e_j\otimes u_{jk}\xi_k
\end{equation}
for $u=(u_{jk})\in M_n(B(V))$ and $\xi=\sum\limits_{k=1}^n e_k\otimes \xi_k\in \ell_{2,n}\otwo V$. This defines a family of matrix norms on $B(V)$ (which satisfy the first but in general not the second axiom of an operator space). We get the usual operator space structure if $V$ is a Hilbert space.

\begin{defn}\label{def-l2-bd}
Let $E$ be an operator space, $V$ a Banach space, and let $\pi:E\to B(V)$ be a bounded linear map. We say that $\pi$ is {\it $\ell_2$-bounded} if there exists a constant $C\ge0$ such that $\|\big( \pi(u_{ij}) \big)\| \le C\|u\|$ for every $n\in\N$ and $u=(u_{ij})\in M_n(E)$, with the matrix norms on $B(V)$ defined above. If this holds with $C=1$, then $\pi$ is said to be {\it $\ell_2$-contractive}.
\end{defn}

Of course this is the usual definition of a completely bounded map if $V$ is a Hilbert space.

We always consider $M_*$ with the structure dual to that of $M$.

\begin{lm}\label{norm-muab}
For every finite set $A\subset\mathfrak A$ the norm of the matrix $\big( \mu_{e_\a e_\b}\big)_{\a,\b\in A}$ in $M_{n}(M_*)$, $n=|A|$, is at most 1.
\end{lm}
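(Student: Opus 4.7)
The plan is to identify the matrix $(\mu_{e_\alpha e_\beta})_{\alpha,\beta\in A}$, via the duality between $M_n(M_*)$ and $M_n(M) = M\bar\otimes M_n$ built into the dual operator space structure on $M_*$, with a linear map $\Phi:M\to M_n$, and then observe that $\Phi$ is a compression by an isometry, hence completely contractive.

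First, recall that since $M_*$ carries the operator space structure dual to $M$, the norm of a matrix $(\nu_{ij})\in M_n(M_*)$ coincides with the completely bounded norm of the associated map $M\to M_n$, $x\mapsto (\nu_{ij}(x))$. So it suffices to show that the map
$$
\Phi:M\to M_n,\qquad \Phi(x) = \big(\mu_{e_\alpha e_\beta}(x)\big)_{\alpha,\beta\in A} = \big(\langle xe_\beta,e_\alpha\rangle_H\big)_{\alpha,\beta\in A}
$$
is completely contractive.

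Fix an enumeration $A=\{\alpha_1,\dots,\alpha_n\}$ and let $V:\ell_{2,n}\to H$ be the isometry sending the $i$th standard basis vector to $e_{\alpha_i}$. Identifying $B(\ell_{2,n})$ with $M_n$, a direct computation of matrix entries shows that $\Phi(x)=V^*xV$, since $\langle V^*xV\delta_j,\delta_i\rangle = \langle xe_{\alpha_j},e_{\alpha_i}\rangle_H$. Because $V$ is an isometry, the compression $x\mapsto V^*xV$ is completely contractive: for any $m\in\N$ and $[x_{kl}]\in M_m(M)$ we have
$$
\big[V^*x_{kl}V\big]_{k,l} = (I_m\otimes V^*)\,[x_{kl}]\,(I_m\otimes V),
$$
and $\|I_m\otimes V\|=\|V\|=1$, so $\|[\Phi(x_{kl})]\|\le\|[x_{kl}]\|$. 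Hence $\|\Phi\|_{cb}\le 1$, which is exactly the desired bound.

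I do not anticipate a serious obstacle; the main point is simply to recognise $\Phi$ as a compression and to invoke the dual operator space structure on $M_*$ correctly. The only care needed is to match the matrix indexing convention between $(\mu_{e_\alpha e_\beta})$ and $V^*xV$, which is settled by the direct computation above.
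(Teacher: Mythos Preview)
Your proof is correct and takes a cleaner, more conceptual route than the paper's. The paper proceeds by a direct computation: it writes the dual norm as
\[
\sup \{ \| (\mu_{e_\alpha e_\beta}(x_{kl})) \|_{M_{n^2}} : x\in M_n(M),\ \|x\|\le1\}
\]
and then, for fixed $x$ and $\xi\in\C^{n^2}$, expands $\|(\mu_{e_\alpha e_\beta}(x_{kl}))\xi\|^2$ as a sum of inner products and estimates it by hand using orthonormality of the $e_\alpha$ and the operator norm of $x$ on $H^n$. Your argument bypasses this entirely by recognising the induced map $\Phi:M\to M_n$ as the compression $x\mapsto V^*xV$ by the isometry $V:\ell_{2,n}\to H$, $\delta_i\mapsto e_{\alpha_i}$, which is completely contractive for the transparent reason that $(I_m\otimes V^*)[x_{kl}](I_m\otimes V)$ has norm at most $\|[x_{kl}]\|$. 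This buys you the bound at every matrix level simultaneously and avoids the explicit index gymnastics; the paper's approach, on the other hand, is self-contained in that it unfolds the definition from \cite{effros-ruan} directly without invoking the $M_n(M_*)\hookrightarrow CB(M,M_n)$ identification.
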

\begin{proof}
This norm is defined by duality \cite[(3.2.3)]{effros-ruan} as
$$
\sup \{ \| (\mu_{e_\a e_\b}(x_{kl})) \|_{M_{n^2}(\C)} : x\in M_n(M), \|x\|\le1\}.
$$
To estimate this norm, fix $x=(x_{kl})\in M_n(M)$ and $\xi\in \C^{n^2}$. We write $\xi=(\xi_{\b l})_{1\le l\le n \atop \b\in A}$ and calculate
\begin{align*}
\| (\mu_{e_\a e_\b}(x_{kl})) (\xi)\|^2_{\ell_{2,n^2}} &= \| (\sum_{\b,l} \mu_{e_\a e_\b}(x_{kl})\xi_{\b l})_{\a \in A\atop 1\le k\le n}\|^2_{\ell_{2,n^2}}
= \| (\sum_{\b,l} \langle x_{kl} e_\b, e_\a\rangle \xi_{\b l})_{\a,k}\|^2_{\ell_{2,n^2}}
\\& = \sum_{\a, k} | \langle \sum_{\b,l} x_{kl} \xi_{\b l} e_\b, e_\a\rangle |^2
 \le \sum_k \| \sum_{\b,l} x_{kl} \xi_{\b l} e_\b \|^2_{H}
\\& = \sum_k \| \sum_{l} x_{kl} (\sum_\b \xi_{\b l} e_\b )\|^2_H
 = \|\big( \sum_l x_{kl} (\sum_\b \xi_{\b l} e_\b) \big)_{1\le k\le n} \|^2_{H^n}
\\& \le \|x\|^2_{M_n(M)} \|\big( \sum_\b \xi_{\b l} e_\b \big)_{1\le l\le n} \|^2_{H^n}
\\& = \|x\|^2_{M_n(M)} \sum_{l=1}^n \| \sum_\b \xi_{\b l} e_\b \|^2_H
\\& = \|x\|^2  \sum_{l=1}^n  \sum_{\b\in A} |\xi_{\b l} |^2
= \|x\|^2 \|\xi\|^2.
\end{align*}
It follows that $\|\big( \mu_{e_\a e_\b}\big)_{\a,\b\in A}\|_{M_{n}(M_*)}\le 1$.
\end{proof}

Suppose now that $U$ is a generator of $\pi:M_*\to B(V)$. 
For any $h\in H, v\in V$ we can decompose $U(h\otimes v) = \sum_\a e_\a\otimes u_\a$. This decomposition can be expressed via $\pi$: for any $\a$ and every $\eta\in V^*$,
$$
\langle U(h\otimes v), \bar e_\a\otimes \eta\rangle = \langle u_\a,\eta\rangle = \langle \pi(\mu_{e_\a h}) v,\eta\rangle,
$$
so that $u_\a=\pi(\mu_{e_\a h}) v$ and
\begin{equation}\label{U-basis}
U(h\otimes v) = \sum_\a e_\a\otimes \pi(\mu_{e_\a h}) v.
\end{equation}

Similarly, if $U^*(\bar k\otimes u) = \sum_\a \bar e_\a \otimes u'_\a$, then
\begin{align*}
\langle v,u'_\a\rangle =  \langle e_\a\otimes v, U^*(\bar k\otimes u) \rangle &= \langle U(e_\a\otimes v), \bar k\otimes u \rangle
 = \langle \pi(\mu_{k e_\a}) v,u\rangle
\\&  = \langle v, \pi(\mu_{k e_\a})^* u\rangle,
\end{align*}
so that $u'_\a = \pi(\mu_{k e_\a})^* u$ and
\begin{equation}\label{U*-basis}
U^*(\bar k\otimes u) = \sum_\a \bar e_\a\otimes \pi(\mu_{k e_\a})^* u.
\end{equation}

\begin{prop}
If a linear map $\pi:M_*\to B(V)$ is $\ell_2$-bounded, then it has a generator $U\in B(H\otwo V)$ of norm $\|U\|\le \|\pi\|_{cb}$.
\end{prop}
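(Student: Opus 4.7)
The strategy is to take formula (\ref{U-basis}) as the definition of $U$, first on the dense subspace of finitely-supported tensors in $H\otwo V$, and then to extend by continuity. The key point will be to show that the outer series over $\alpha$ converges and admits the bound $\|\pi\|_{cb}$; once that is in hand, everything else is essentially bookkeeping.

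For every finite $A\subset\mathfrak{A}$, Lemma \ref{norm-muab} gives $\|(\mu_{e_\alpha e_\beta})_{\alpha,\beta\in A}\|_{M_{|A|}(M_*)}\le 1$. Applying the $\ell_2$-bounded hypothesis together with the identification $\phi_{|A|}$ of (\ref{phin}), I obtain an operator
$$
T_A\in B(\ell_{2,A}\otwo V),\qquad \|T_A\|\le \|\pi\|_{cb},
$$
acting by $T_A\big(\sum_{\beta\in A} e_\beta\otimes v_\beta\big) = \sum_{\alpha\in A}e_\alpha\otimes \sum_{\beta\in A}\pi(\mu_{e_\alpha e_\beta})v_\beta$. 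For a finitely-supported $\xi=\sum_{\beta\in A_0}e_\beta\otimes v_\beta$ and any finite $B\subset\mathfrak{A}$, set $A=A_0\cup B$, view $\xi$ as an element of $\ell_{2,A}\otwo V$ by padding with zeros, and put $w_\alpha=\sum_{\beta\in A_0}\pi(\mu_{e_\alpha e_\beta})v_\beta$. Then $T_A\xi=\sum_{\alpha\in A}e_\alpha\otimes w_\alpha$, so
$$
\sum_{\alpha\in B}\|w_\alpha\|^2 \le \|T_A\xi\|^2 \le \|\pi\|_{cb}^2\,\|\xi\|^2.
$$
Letting $B$ exhaust $\mathfrak{A}$ simultaneously shows that $U\xi:=\sum_{\alpha\in\mathfrak{A}}e_\alpha\otimes w_\alpha$ defines an element of $H\otwo V$ and that $\|U\xi\|\le \|\pi\|_{cb}\|\xi\|$. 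By density, $U$ extends to a bounded operator on $H\otwo V$ with the same norm bound.

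It remains to verify that $U$ is a generator for $\pi$ in the sense of (\ref{U-pi}). For basis vectors $h=e_\beta$, $k=e_\alpha$ the identity reduces to the very definition of $w_\alpha$; the general case follows by bilinearity in $(h,k)$ together with the expansion $\mu_{kh}=\sum_\alpha \bar k_\alpha\,\mu_{e_\alpha h}$, plus a standard limit/continuity argument for non-finitely-supported $h,k$. The single genuine obstacle is the convergence of the outer series in $\alpha$, which can fail for a merely bounded $\pi$: the $\ell_2$-bounded hypothesis, invoked through Lemma \ref{norm-muab}, is exactly what supplies the uniform control on partial sums needed to apply completeness of $H\otwo V$.
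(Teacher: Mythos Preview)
Your proposal is correct and follows essentially the same approach as the paper: define $U$ on finitely supported tensors via formula (\ref{U-basis}), invoke Lemma \ref{norm-muab} together with the $\ell_2$-boundedness hypothesis to bound the partial sums uniformly by $\|\pi\|_{cb}\|\xi\|$, and extend by density. Your write-up is in fact a bit more explicit than the paper's in separating the support set $A_0$ from the truncation set $B$ and in spelling out the verification of (\ref{U-pi}), but the underlying argument is the same.
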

\begin{proof}
Suppose that $\pi$ is $\ell_2$-bounded.
Pick $v=\sum_{\a\in A} e_\a\otimes v_\a\in H\otwo V$, where $A$ is finite; we set also $v_\a=0$ for $\a\notin A$. According to \eqref{U-basis}, we should set
\begin{equation} \label{Uv}
Uv = \sum_{\a,\b} e_\b\otimes\pi(\mu_{e_\b e_\a})v_\a.
\end{equation}
For every finite $B\supset A$ of cardinality $n=|B|$ we have, by Lemma \ref{norm-muab}:
\begin{align*}
\|\sum_{\a,\b\in B} e_\b\otimes\pi(\mu_{e_\b e_\a})v_\a\| &
\le \|\big( \pi(\mu_{e_\b e_\a})\big)_{\a,\b\in B}\|_{M_{n}(V)} \|v\|
\\& \le \|\pi\|_{cb} \|\big( \mu_{e_\b e_\a}\big)_{\a,\b\in B}\|_{M_{n}(M_*)} \|v\| \le \|\pi\|_{cb}\|v\|.
\end{align*}
This implies the convergence of the series \eqref{Uv}, with the estimate $\|Uv\| 
\le \|\pi\|_{cb} \|v\|$. Next, $U$ can be extended by continuity to a bounded operator on $H\otwo V$, which satisfies \eqref{U-basis} by construction.
\end{proof}

\subsection{Definitions of $WAP_{cb}(M)$ and $WAP_{iso}(M)$}\label{sec-wap-iso}

\begin{defn}\label{def-wap-cb-iso}
We define the space $WAP_{cb,l}(M)$ ($WAP_{cb,r}(M)$) as the space of coefficients of all left (right) covariant representations of $M_*$ on reflexive Banach spaces in which $\pi$ is $\ell_2$-contractive. Next, $WAP_{iso,l}(M)$ (respectively $WAP_{iso,r}(M)$) is the subspace of $WAP_{cb,l}(M)$ ($WAP_{cb,r}(M)$) consisting of coefficients of left (respectively right) covariant representations for which the generator is isometric onto. If $M$ is coamenable, the left and right spaces are the same and we denote them $WAP_{cb}(M)$ and $WAP_{iso}(M)$ respectively.
\end{defn}

There is no reason to expect in general that these spaces would be norm closed in $M$. But the invariant means constructed later will of course extend to the closure. 

The next example shows that the left/right versions can be very different:

\begin{ex}\label{trivial-comult}
Consider $M=B(H)$ with the trivial comultiplication $\Delta(x) = 1\otimes x$. This induces on $M_*=N(H)$ (the space of trace class operators) the multiplication $\mu\nu=\mu(1)\nu$. One has then $\mu.x=\mu(x)1$ and $x.\mu=\mu(1)x$ for any $x\in M$, $\mu\in M_*$.

Fix $x\in M$ and define $\xi:M_*\to \C$ by $\xi(\mu)=\mu(x)$. This map is left covariant with respect to the trivial representation $\pi(\mu)=\mu(1)$: $\xi(\mu\nu) = (\mu\nu)(x) = \mu(1)\nu(x) = \pi(\mu)\xi(\nu)$. Next, $\pi$ has an isometric generator: with the identity operator $\Id$ on $H\otwo \C$, the identity \eqref{U-pi} holds:
$$
\langle \Id(h\otimes v), \bar k\otimes \eta\rangle
= \langle h,k\rangle_H \langle v,\eta\rangle = \mu_{kh}(1) v\eta = \langle \pi(\mu_{k h}) v,\eta\rangle.
$$
Thus, $x\in WAP_{iso,l}(M)$, so that $WAP_{iso,l}(M)=M$ (and also equal to $WAP(M)$).

 Any functional $\phi$ on $M$ is clearly right invariant, so any positive functional will serve as a right invariant mean. But (if only $M\ne\{\C1\}$) there is no left invariant mean on $M$, as $\phi(\mu.x)=\mu(x)\phi(1)$ is not equal to $\phi(x)\mu(1)$ for $x,\mu$ such that $\phi(x)\ne0$ but $\mu(x)=0\ne\mu(1)$.

Let us suppose now that $x\in M$ is such that $\mu(x) = \langle \pi(\mu)\xi,\eta\rangle$ for every $\mu\in M_*$, where $\pi$ is a representation of $M_*$ on a reflexive Banach space $V$ and $\xi\in V$, $\eta\in V^*$. Let $V_0$ denote the closure of $\pi(M_*)V$. For every $\mu,\nu\in M_*$, $v\in V$ we have $\pi(\mu\nu)v = \pi(\mu)\pi(\nu)v = \mu(1)\pi(\nu)v$, whence $\pi(\mu)=\mu(1)\Id$ on $V_0$. If $x\notin\C1$, we cannot have $\xi\in V_0$, thus $V_0\ne V$.

Suppose now that $\pi$ has an isometric generator $U$. For $v\in V_0$, we have
$$
U(e_\a\otimes v) = \sum_\b e_\b\otimes \pi(\mu_{e_\b e_\a})v = \sum_\b e_\b\otimes \mu_{e_\b e_\a}(1) v = e_\a\otimes v,
$$
so that $U$ acts as the identity on $H\otimes V_0$. On the other hand, $e_\a\otimes\xi \notin H\otimes V_0$, but we have clearly $U(e_\a\otimes\xi)\in H\otimes V_0$. This implies that $U$ cannot be isometric.

This shows that if we defined the space $WAP_{iso}(M)$ without considering covariant pairs, in this case it would equal $\C1$.
\end{ex}

\begin{rem}
It follows from the proof of \cite{daws-das} that the Eberlein algebra $E(\G)$ of a locally compact quantum group $\G$ is the closure in $L^\infty(\G)$ of coefficients of unitary representations of $L^1(\G)$. As a consequence, $E(\G)\subset WAP_{iso}(L^\infty(\G))$.
\end{rem}

\begin{nots}
By definition, for every $x\in WAP_{cb,l}(M)$ (respectively $WAP_{cb,r}(M)$) there exists a reflexive Banach space $V$, an $\ell_2$-bounded left (right) covariant representation $(\pi,\xi)$ of $M_*$ on $V$ and a vector $\eta\in V^*$ such that $\mu(x) = \langle \xi(\mu),\eta\rangle$ for every $\mu\in M_*$. We will say that the quadruple $(V,\pi,\xi,\eta)$ is associated to $x$.

The subspace $V_\xi=\overline{\xi(M_*)}$ is $\pi$-invariant and reflexive, thus we can consider the restriction of $\pi$ onto $V_\xi$ instead. In the sequel, we will always suppose that $V=V_\xi$.
\end{nots}

All these spaces are invariant under left and right actions of $M_*$. Indeed, let for example $(V,\pi,\xi,\eta)$ be associated to $x\in WAP_{cb,l}(M)$. Then $\nu(\mu.x) = \langle \xi(\nu\mu),\eta\rangle = \langle \pi(\nu)\xi(\mu),\eta\rangle$ for $\mu,\nu\in M_*$, so that setting $\xi_\mu(\nu)=\pi(\nu)\xi(\mu)$ we get $\mu.x\in WAP_{cb,l}(M)$ with the quadruple $(V,\pi,\xi_\mu,\eta)$ is associated to it. At the same time, $\nu(x.\mu) = \langle \xi(\mu\nu),\eta\rangle = \langle \xi(\nu), \pi(\mu)^*\eta\rangle$, so that $x.\mu\in  WAP_{cb,l}(M)$ with the quadruple $(V,\pi,\xi,\pi(\mu)^*\eta)$. The representation $\pi$ remains thus the same, and it is $\ell_2$-bounded or with isometric generator respectively.

\begin{nots}\label{map-R}
Fix $x\in WAP(M)$, and let $(V,\pi,\xi,\eta)$ be associated with it, with $(\pi,\xi)$ being left covariant. As in the classical case \cite{godement}, we can define a continuous map $L:V\to M$ as follows. Set first $L\big(\xi(\mu)\big) = \mu.x$ for $\mu\in M_*$. If $\xi(\mu)=\xi(\nu)$, then for every $\lambda\in M_*$ we have $\lambda(\mu.x) = (\lambda\mu)(x) = \langle \xi(\lambda\mu),\eta\rangle = \langle \pi(\lambda) \xi(\mu),\eta\rangle = \langle \pi(\lambda) \xi(\nu),\eta\rangle = \lambda(\nu.x)$, so that $\mu.x=\nu.x$ and $L$ is well defined. Moreover, by the same calculations
$$
\| L\big(\xi(\mu)\big) \| = \sup_{\lambda\in S} |\lambda(\mu.x)| = \sup_{\lambda\in S} |\langle \pi(\lambda)\xi(\mu), \eta\rangle| \le \|\eta\|\,\|\xi(\mu)\|.
$$
It follows that $L$ extends to $V$ by continuity (recall that we suppose that $V$ is the closure of $\xi(M_*)$).
In a similar way, we define a map $R':V^*\to M$ by $R'(\pi(\mu)^*\eta)=x.\mu$ and check that it is well defined and extends by continuity to the closure of $\pi(M_*)^*V^*$ (we need nothing more from it).

If the pair $(\pi,\xi)$ is right covariant, two other maps are defined: $R:V\to M$ by $R\big(\xi(\mu)\big) = x.\mu$ and $L':\pi(M_*)^*V^*\to M$ by $L'(\pi(\mu)^*\eta)=\mu.x$, $\mu\in M_*$.
\end{nots}

It is unclear whether $WAP_{cb}(M)$ is closed under multiplication. It is known that $WAP(M)$ is not, in general; the article \cite{runde-factor} gives some partial results and can show the level of complexity of the question.


\section{Invariant means on $WAP_{iso}(M)$}

\subsection{Existence of constants in $P$-orbits}

Recall that we denote by $P$ the set of normal states of $M$, that is, $P=\{\mu\in S: \mu(1)=1\}$. This is a subsemigroup in $S$.

Note that $P.x$ is convex for every $x\in M$, so that its norm and weak closures are the same. We denote this closure by $\overline{P.x}$. The same applies to $\overline{x.P}$.

\begin{prop}\label{cxP}
For every $x\in WAP_{iso,l}(M)$ there is a constant $c\in\C$ such that $c1$ is in $\overline{P.x}$.
\end{prop}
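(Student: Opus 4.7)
Let $(V,\pi,\xi,\eta)$ be a quadruple associated to $x$ with $V=\overline{\xi(M_*)}$ reflexive, and set $K:=\overline{\xi(P)}\subset V$. This $K$ is convex (linearity of $\xi$, convexity of $P$), weakly compact (reflexivity of $V$ plus $\xi(P)\subset V_1$), has weak closure equal to norm closure (Mazur), and is $\pi(P)$-invariant by left covariance: $\pi(\mu)\xi(\nu)=\xi(\mu\nu)\in\xi(P)$ for $\mu,\nu\in P$. The plan is to produce a common $\pi(P)$-fixed vector $v_0\in K$; then setting $c:=\langle v_0,\eta\rangle$, the continuous extension $L:V\to M$ from Notations~\ref{map-R} will give $L(v_0)=c\cdot 1\in\overline{P.x}$.

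To produce $v_0$ I would apply the Jacobs--de Leeuw--Glicksberg decomposition to the contractive semigroup representation $\pi|_P:P\to B(V)$ on the reflexive space $V$. Reflexivity forces each orbit $\pi(P)v$ to be weakly relatively compact, so JDLG yields a canonical splitting $V=V_r\oplus V_s$ in which $\pi(P)|_{V_r}$ extends to a compact topological group $G$ and, for every $v\in V_s$, $0\in\overline{\pi(P)v}^w$. Let $V_0:=V_r^G$, the common $\pi(P)$-fixed subspace, with canonical projection $P_0:V\to V_0$. The Haar-integral formula places $P_G=P_0|_{V_r}=\int_G g\,dh(g)$ in the WOT-closed convex hull of $G$, and since $\pi(P)$ is convex (from linearity of $\pi$ and convexity of $P$) and $\pi(P)|_{V_r}$ is WOT-dense in $G$, this gives $P_G\in\overline{\pi(P)|_{V_r}}^{\mathrm{WOT}}$. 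A joint-net argument inside the WOT-compact set $\overline{\pi(P)}^{\mathrm{WOT}}\subset B(V)_1$ (using reflexivity of $V$), combining this approximation on $V_r$ with the flight behavior on $V_s$, will produce a net $\mu_\alpha\in P$ with $\pi(\mu_\alpha)\to P_0$ in WOT on all of $V$.

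Fixing any $\mu_0\in P$ and setting $v_0:=P_0\xi(\mu_0)=\lim_\alpha\pi(\mu_\alpha)\xi(\mu_0)=\lim_\alpha\xi(\mu_\alpha\mu_0)$ weakly in $V$, with $\mu_\alpha\mu_0\in P$, one obtains $v_0\in\overline{\xi(P)}^w=K$, and $v_0\in V_0$. The latter gives $\pi(\mu)v_0=v_0$ for every $\mu\in P$, hence $\pi(\nu)v_0=\nu(1)v_0$ for every $\nu\in M_*$ by linearity. With $c:=\langle v_0,\eta\rangle$, the identity $\nu(L(v_0))=\langle\pi(\nu)v_0,\eta\rangle=c\nu(1)=\nu(c\cdot 1)$ forces $L(v_0)=c\cdot 1$ in $M$. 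Norm-continuity of $L$ and norm-closedness of $K$ (convex and weakly closed in a reflexive space) then yield $c\cdot 1=L(v_0)\in L(K)\subset\overline{P.x}$.

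The hard step is the joint WOT-convergence $\pi(\mu_\alpha)\to P_0$: a single net must simultaneously realize the Haar projection on $V_r$ and kill the flight part on $V_s$. The isometric-onto generator hypothesis most plausibly enters here, providing enough rigidity in $\overline{\pi(P)}^{\mathrm{WOT}}$ to carry out this joint approximation; an attractive alternative would be to bypass it entirely by producing the fixed point $v_0$ through a minimal-idempotent argument in the Ellis semigroup of $\pi(P)$ acting on the weakly compact convex set $K$, where one would seek an idempotent that is a left-zero for $\pi(P)$ and whose image gives the required fixed point.
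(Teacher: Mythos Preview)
Your proposal has a genuine gap, and you have identified it yourself: the ``hard step'' of producing a single net $(\mu_\alpha)\subset P$ with $\pi(\mu_\alpha)\to P_0$ in WOT is not carried out, only described as plausible. More seriously, nothing in your argument uses the isometric-onto generator hypothesis. If your JDLG\,/\,Ellis-idempotent route worked as sketched, it would prove the same conclusion for every $x\in WAP_{cb,l}(M)$, indeed for every $x\in WAP(M)$; but the existence of invariant means on $WAP(M)$ for general Hopf--von Neumann algebras is exactly the open problem this paper is addressing. The semigroup $P$ is in general neither abelian nor amenable, so the standard ergodic averaging that would normally give you $P_0$ as a WOT limit of $\pi(P)$ is unavailable, and the minimal-idempotent argument in the Ellis semigroup does not by itself produce a \emph{fixed} point for all of $\pi(P)$ without some distality input.

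The paper's proof uses the isometric generator in a concrete and essential way, and the mechanism is different from yours. One passes not to $K=\overline{\xi(P)}$ but to its subset $K_{\min}$ of vectors of minimal norm (nonempty by reflexivity). For $v\in K_{\min}$ the decomposition \eqref{U-basis} gives $\|v\|^2=\|U(e_\alpha\otimes v)\|^2=\sum_\beta\|\pi(\mu_{e_\beta e_\alpha})v\|^2$, and since $\mu_{e_\alpha e_\alpha}\in P$ forces $\|\pi(\mu_{e_\alpha e_\alpha})v\|=\|v\|$ by minimality, all off-diagonal terms $\pi(\mu_{e_\beta e_\alpha})v$ vanish and each $U_\alpha:=\pi(\mu_{e_\alpha e_\alpha})$ is isometric on $K_{\min}$. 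Now Ryll--Nardzewski applies to the distal semigroup generated by the $U_\alpha$ on the weakly compact convex set $K_{\min}$, yielding a common fixed point $\tilde\xi$. The off-diagonal vanishing then gives $U(e_\alpha\otimes\tilde\xi)=e_\alpha\otimes\tilde\xi$ for every $\alpha$, whence $\pi(\mu)\tilde\xi=\mu(1)\tilde\xi$ for all $\mu\in M_*$, and your final paragraph (computing $L(\tilde\xi)=\langle\tilde\xi,\eta\rangle 1$) finishes the argument. The minimal-norm trick together with the isometry of $U$ is precisely what supplies the distality that your JDLG approach is missing.
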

\begin{proof}
Let $K_0$ be the norm closure of $\xi(P)$, and let $K$ be the subset of vectors of minimal norm in $K_0$. It is nonempty since $V$ is reflexive and $K_0$ is convex and bounded. By definition, $K_0$ is $\pi(P)$-invariant. Since $\pi(\mu)$ is contractive for every $\mu\in P$, it sends $K$ to itself and preserves the norm of every $v\in K$. $K$ is convex, bounded and norm closed, so it is weakly compact.

If $v\in K$, then for every $\a$ we have by \eqref{U-basis} $U(e_\a\otimes v) = \sum_\beta e_\beta\otimes \pi(\mu_{e_\beta e_\a}) v$, with the norm
$$
\|U(e_\a\otimes v)\| = \Big(\sum_\beta \|\pi(\mu_{e_\beta e_\a}) v\|^2 \Big)^{1/2}.
$$
Since $U$ is isometric, this is equal to $\|e_\a\otimes v\| = \|v\|$, and as $\mu_{e_\a e_\a}\in P$, this equals also to $\|\pi(\mu_{e_\a e_\a}) v\|$. It follows that 
 $\pi(\mu_{e_\beta e_\a}) v=0$ if $\beta\ne\a$.

The maps $U_\a=\pi(\mu_{e_\a e_\a})$ are thus isometric on $K$: if $u,v\in K$ then
$$
\|U_\a(u-v)\| = \|U(e_\a\otimes (u-v))\| = \|e_\a\otimes (u-v)\| = \|u-v\|.
$$
We are therefore in the assumptions of the Ryll-Nardzewsky theorem \cite[Theorem A.24]{burckel} with the semigroup generated by $(U_\a)$, and it follows that $(U_\a)$ have a common fixed point $\tilde\xi \in K$.

For every $\a$ we have thus $U(e_\a\otimes\tilde\xi) = e_\a\otimes\tilde\xi$, and therefore $U(u\otimes\tilde\xi)=u\otimes\tilde\xi$ for every $u\in H$. This implies that for every $v\in H$, $\eta\in V^*$
$$
\langle \pi(\mu_{vu})\tilde\xi,\eta\rangle = \langle U(u\otimes\tilde\xi),\bar v\otimes\eta\rangle = \langle u\otimes\tilde\xi,\bar v\otimes\eta\rangle
= \langle u, \bar v\rangle \langle \tilde\xi,\eta\rangle = \mu_{vu}(1) \langle \tilde\xi,\eta\rangle,
$$
so that $\pi(\mu_{vu})\tilde\xi = \mu_{vu}(1) \tilde\xi$, which implies that $\pi(\mu)\tilde\xi = \mu(1) \tilde\xi$ for general $\mu\in M_*$. In particular, $\tilde\xi$ is a fixed vector of $\pi(P)$.

Let $L$ be the map defined in Notations \ref{map-R}. We show next that $L(\tilde\xi)\in \C1$. Pick a sequence $(\mu_n)\subset M_*$ such that $\tilde\xi = \lim \xi(\mu_n)$. For every $\mu\in M_*$ we have
\begin{align*}
\mu(L\tilde\xi) &= \lim \mu\big( L\xi(\mu_n)\big) = \lim \mu(\mu_n.x) =  \lim \langle \xi(\mu\mu_n), \eta\rangle = \lim \langle \pi(\mu)\xi(\mu_n), \eta\rangle 
\\&= \langle \pi(\mu)\tilde\xi, \eta\rangle = \mu(1) \langle \tilde\xi, \eta\rangle,
\end{align*}
so that $L\tilde\xi = \langle \tilde\xi, \eta\rangle 1$.

By construction, $\tilde\xi\in \overline{\xi(P)}$, so that $L\tilde\xi\in \overline{P.x}$.
In particular, $\|L\tilde\xi\|\le \|x\|$.
\end{proof}

It is clear that if $x\in WAP_{iso,r}(M)$ then in the same way one shows that there is a constant $c\in\C$ such that $c1\in \overline{x.P}$.

\begin{defn}\label{def-nontrivial}
Let $M$ be a Hopf--von Neumann algebra. Say that its comultiplication $\Delta$ is {\it left (right) nontrivial } if for $x\in M$ the identity $\Delta(x)=x\otimes1$ (respectively $\Delta(x)=1\otimes x$) implies $x\in\C1$.
As in the case of a locally compact quantum group, we say that a Hopf--von Neumann algebra $M$ is {\it coamenable} if $M_*$ has a bounded approximate identity. This condition is always satisfied if $M=L^\infty(G)$ with a locally compact group $G$; for the group von Neumann algebra $M=VN(G)$ it is equivalent to $G$ being amenable.
\end{defn}

\begin{prop}
If $M$ is coamenable, then its comultiplication is left and right nontrivial.
\end{prop}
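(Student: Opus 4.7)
The plan is to dualize the condition $\Delta(x)=x\otimes 1$ into an identity about the multiplication on $M_*$, and then use the bounded approximate identity to force $x$ to be a scalar. The whole argument is just a few lines.

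First, pair $\Delta(x)=x\otimes 1$ against $\mu\otimes\nu$ for $\mu,\nu\in M_*$:
\[
(\mu\nu)(x) \;=\; (\mu\otimes\nu)(\Delta(x)) \;=\; \mu(x)\,\nu(1).
\]
Second, let $(a_i)$ be a bounded approximate identity in $M_*$, and pick any $\mu_0\in M_*$ with $\mu_0(1)\ne0$ (such a $\mu_0$ exists whenever $M\ne\{0\}$, since $M_*$ separates $M$; the case $M=\{0\}$ is trivial). Because $a_i\mu_0\to\mu_0$ in norm and $x\in(M_*)^*$ is bounded, we get $(a_i\mu_0)(x)\to\mu_0(x)$. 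But by the identity above, $(a_i\mu_0)(x)=a_i(x)\mu_0(1)$, so the scalar net $a_i(x)$ converges to
\[
c \;:=\; \mu_0(x)/\mu_0(1).
\]

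Third, for an arbitrary $\mu\in M_*$, the same approximation gives
\[
\mu(x) \;=\; \lim_i (a_i\mu)(x) \;=\; \lim_i a_i(x)\,\mu(1) \;=\; c\,\mu(1).
\]
Therefore $\mu(x-c\cdot 1)=0$ for every $\mu\in M_*$, and since $M_*$ separates $M$ we conclude $x=c\cdot 1\in\C1$. This proves left-nontriviality. For right-nontriviality the argument is entirely symmetric: $\Delta(x)=1\otimes x$ gives $(\mu\nu)(x)=\mu(1)\nu(x)$, and using the other side of the BAI ($\mu a_i\to\mu$) one again obtains $\mu(x)=c\mu(1)$.

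There is no real obstacle here beyond unpacking the hypothesis. The whole content of the proposition is the existence of the bounded approximate identity granted by coamenability, combined with the standard fact that the predual separates $M$; no properties specific to von Neumann algebras (beyond having a nonzero unit) are used.
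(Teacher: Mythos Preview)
Your proof is correct and follows essentially the same approach as the paper: both pair $\Delta(x)=x\otimes 1$ with $\mu\otimes\nu$ to obtain $(u_i\mu)(x)=u_i(x)\mu(1)$, then use $u_i\mu\to\mu$ to conclude $\mu(x)=c\,\mu(1)$ with $c=\lim_i u_i(x)$. The only difference is cosmetic: you explicitly pick a $\mu_0$ with $\mu_0(1)\ne0$ to justify the existence of $\lim_i a_i(x)$, whereas the paper leaves this implicit.
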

\begin{proof}
Let $(u_i)$ be a bounded approximate unit in $M_*$. Suppose that $x\in M$ and $\Delta(x) = x\otimes1$. For every $\mu\in M_*$, we have $\mu(x) = \lim_i (u_i\mu)(x) = \lim_i (u_i\otimes \mu)(\Delta(x)) = \lim_i u_i(x) \mu(1)$. In particular, the limit $c=\lim_i u_i(x)$ does exist, and one obtains $x=c1$. The right nontriviality is proved similarly.
\end{proof}

\begin{prop}\label{cPx}
Suppose that the comultiplication of $M$ is right nontrivial. Then for every $x\in WAP_{iso,l}(M)$ there exists a constant $c\in\C$ such that $c1$ is in $\overline{x.P}$.
\end{prop}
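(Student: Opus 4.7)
The strategy is to mirror the proof of Proposition~\ref{cxP}, but carried out on the dual side using the adjoint $U^*$ of the isometric generator, with a final step invoking right nontriviality of $\Delta$.

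Fix $x\in WAP_{iso,l}(M)$ and an associated quadruple $(V,\pi,\xi,\eta)$ with surjective isometric generator $U\in B(H\otwo V)$. Then $U^*\in B(\bar H\otwo V^*)$ is also a surjective isometry. Consider $K_0^*:=\overline{\pi(P)^*\eta}\subset V^*$: it is convex (as the closure of the image of the convex set $P$ under a linear map), bounded (since $\|\pi(\mu)^*\|\le1$ for $\mu\in P$), and invariant under $\pi(P)^*$ via $\pi(\nu)^*\pi(\mu)^*\eta=\pi(\mu\nu)^*\eta$. Let $K^*$ be the subset of minimum-norm vectors in $K_0^*$, which is nonempty by reflexivity of $V^*$.

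Plugging $u\in K^*$ into \eqref{U*-basis} and using that $U^*$ is an isometry yields
\[
\|u\|^2=\|U^*(\bar e_\a\otimes u)\|^2=\sum_\b \|\pi(\mu_{e_\a e_\b})^*u\|^2,
\]
and since $\pi(\mu_{e_\a e_\a})^*u\in K_0^*$ has norm at least $\|u\|$, minimality forces $\pi(\mu_{e_\a e_\b})^*u=0$ for $\b\ne\a$ together with $\|\pi(\mu_{e_\a e_\a})^*u\|=\|u\|$. As in Proposition~\ref{cxP}, this implies that each operator $\pi(\mu_{e_\a e_\a})^*$ is isometric on $K^*$, so the Ryll--Nardzewski theorem produces a common fixed point $\tilde\eta\in K^*$. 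The identity $U^*(\bar u\otimes\tilde\eta)=\bar u\otimes\tilde\eta$ for every $u\in H$ follows by the same computation, and together with the defining relation of the generator and density of vector functionals in $M_*$, this yields $\pi(\mu)^*\tilde\eta=\mu(1)\tilde\eta$ for every $\mu\in M_*$.

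Choose a net $\mu_n\in P$ with $\pi(\mu_n)^*\eta\to\tilde\eta$ in norm and set $y:=R'(\tilde\eta)\in M$; by continuity of the map $R'$ from Notations~\ref{map-R}, $y=\lim x.\mu_n$ in norm, so $y\in\overline{x.P}$. To see that $y\in\C1$, I would invoke right nontriviality: for any $\mu,\nu\in M_*$,
\[
(\mu\otimes\nu)\Delta(y)=(\mu\nu)(y)=\langle\pi(\mu)\xi(\nu),\tilde\eta\rangle=\langle\xi(\nu),\pi(\mu)^*\tilde\eta\rangle=\mu(1)\nu(y)=(\mu\otimes\nu)(1\otimes y),
\]
hence $\Delta(y)=1\otimes y$, and right nontriviality of $\Delta$ forces $y\in\C1$. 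The main obstacle compared to Proposition~\ref{cxP} is that the fixed vector $\tilde\eta$ now lives in $V^*$, so no direct covariance relation immediately produces a constant in $\overline{x.P}$; the right nontriviality of $\Delta$ is precisely what substitutes for the structural shortcut available in the left case.
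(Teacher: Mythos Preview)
Your argument is correct and follows essentially the same route as the paper: you work on the dual side with $U^*$, obtain a common fixed vector $\tilde\eta$ via Ryll--Nardzewski, deduce $\pi(\mu)^*\tilde\eta=\mu(1)\tilde\eta$, and then use right nontriviality to show $R'(\tilde\eta)\in\C1$. The only cosmetic difference is that the paper first records the intermediate identity $\mu(R'\tilde\eta)=\langle\xi(\mu),\tilde\eta\rangle$ explicitly before the final computation, whereas you use it implicitly inside your displayed chain; making that step explicit would tighten the write-up.
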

\begin{proof}
We can start similarly: Let $K_0$ be the norm closure of $\pi(P)^*\eta$ in $V^*$, and let $K$ be the subset of minimal norm in $K_0$. For every $\mu\in P$, the map $\pi(\mu)^*$ sends $K$ to itself and preserves the norm on it.

Using the decomposition $U^*(\bar e_\a\otimes u) = \sum_\beta \bar e_\beta\otimes \pi(\mu_{e_\a e_\beta})^* u$ dual to \eqref{U-basis} and the fact that $U^*$ is isometric, we conclude in the same way that the maps $\bar U_\a = \pi(\mu_{e_\a e_\a})^*$ are isometric on $K$. Applying again the Ryll-Nardzewski theorem to the semigroup generated by $\bar U_\a$, we obtain a common fixed point $\tilde\eta\in K$, for which $U^*(\bar u\otimes \tilde\eta) = \bar u\otimes \tilde\eta$ for any $u\in H$.

It follows that for every $u,v\in H$ and $\nu\in M_*$
$$
\langle \xi(\nu),\pi(\mu_{vu})^*\tilde\eta\rangle
=  \langle U(u\otimes\xi(\nu)),\bar v\otimes\tilde\eta\rangle
=  \langle u\otimes\xi(\nu),U^*(\bar v\otimes\tilde\eta)\rangle
= \langle u,v\rangle_H \langle \xi(\nu),\tilde\eta\rangle = \mu_{vu}(1) \langle \xi(\nu),\tilde\eta\rangle.
$$
By density of $\xi(M_*)$ in $V$ and by density of the linear span of $(\mu_{uv})$ in $M_*$ it follows that $\pi(\mu)^*\tilde\eta = \mu(1) \tilde\eta$ for any $\mu\in M_*$.

Proving that $R'\tilde\eta$ is a constant (with the map $R'$ defined in Notations \eqref{map-R}) is more delicate than for the left orbit. Let $(\mu_n)\subset P$ be a sequence such that $\pi(\mu_n)^*\eta \to \tilde\eta$; it exists because $\tilde\eta$ is in the closure of $\pi(P)^*\eta$. For every $\mu\in M_*$ we have
$$
\mu(R'\tilde\eta) = \lim \mu(R\big( \pi(\mu_n)^*\eta\big) = \lim \mu(x.\mu_n) = \lim (\mu_n\mu)(x) 
 = \lim \langle \pi(\mu_n)\xi(\mu),\eta\rangle = \langle \xi(\mu),\tilde\eta\rangle.
$$
For every $\mu,\nu\in M_*$ then
$$
\mu\big( (R'\tilde\eta).\nu \big) = (\nu\mu)(R'\tilde\eta) =  \langle \xi(\nu\mu),\tilde\eta\rangle = \langle \xi(\mu),\pi(\nu)^*\tilde\eta\rangle =  \langle \xi(\mu),\nu(1)\tilde\eta\rangle
 = \mu\big( \nu(1) R'\tilde\eta\big),
$$
which implies $(R'\tilde\eta).\nu = \nu(1) R'\tilde\eta$ and $\Delta(R'\tilde\eta) = 1\otimes R'\tilde\eta$.
By assumption, there exists then $c\in\C$ such that $R'\tilde\eta=c1$. 
\end{proof}

Similarly, one proves that if the comultiplication of $M$ is left nontrivial then for every $x\in WAP_{iso,r}(M)$ there exists a constant $c\in\C$ such that $c1$ is in $\overline{P.x}$.

\subsection{Invariant means}

We prove below the existence of a right invariant mean on $WAP_{iso,l}(M)$, and under the condition of right nontriviality of the comultiplication, its left invariance as well. This asymmetry can appear indeed, as shown in Example \ref{trivial-comult}.

The following proof is close to the classical one \cite[Theorem 1.25]{burckel}. 

\begin{lm}\label{Px-properties}
For every $x,y\in M$ the following inclusions hold:
\begin{enumerate}
\item If $x\ge0$ then every constant in $\overline{P.x}$ is nonnegative;
\item If $x=x^*$ then every constant in $\overline{P.x}$ is real;
\item If $x,y\in WAP(M)$, then $\overline{P.(x+y)}\subset \overline{P.x}+\overline{P.y}$.
\item If $x\in WAP(M)$ and $\mu\in P$, then the only possible constant in $\overline{P.(x.\mu-x)}$ is zero.
\end{enumerate}
\end{lm}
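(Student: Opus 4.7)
The plan is to dispatch (1)--(3) using routine preservation arguments and to concentrate the effort on (4), where an iteration trick is needed.

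For (1) and (2), I would use the slice-map formula $\mu.x=(\id\otimes\mu)\Delta(x)$, valid for every $\mu\in M_*$ and $x\in M$. Since $\Delta$ is a normal unital $*$-homomorphism and $\mu\in P$ is a positive normal functional on $M$, the slice map $(\id\otimes\mu): M\bar\otimes M\to M$ is normal and positive, so $\mu.x$ is positive (resp.\ self-adjoint) whenever $x$ is. Both properties pass to weak limits in $M$, so if $c1\in\overline{P.x}$ we get $c1\ge 0$ (resp.\ $c1=(c1)^*$), which reads off as $c\ge 0$ (resp.\ $c\in\R$) upon pairing with any normal state. For (3), the WAP hypothesis ensures that $\overline{P.x}$ and $\overline{P.y}$ are weakly compact (they sit inside the weakly compact sets $\overline{S.x}$ and $\overline{S.y}$), and since addition on $M$ is jointly weakly continuous, their sum $\overline{P.x}+\overline{P.y}$ is weakly compact, hence weakly closed. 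As this sum obviously contains $P.(x+y)=\{\nu.x+\nu.y:\nu\in P\}$, it also contains the weak closure $\overline{P.(x+y)}$.

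For (4), fix $c\in\C$ with $c1\in\overline{P.(x.\mu-x)}$ and choose a net $\nu_\a\in P$ such that $(\nu_\a.x).\mu-\nu_\a.x\to c1$ weakly. Using $x\in WAP(M)$, I pass to a subnet along which $\nu_\a.x\to\psi$ weakly for some $\psi\in\overline{P.x}\subset M$. The right action $y\mapsto y.\mu$ on $M$ is bounded (since $\|y.\mu\|\le\|y\|\|\mu\|$) and therefore weakly continuous, so $(\nu_\a.x).\mu\to\psi.\mu$ weakly. Uniqueness of the weak limit then yields the key identity $\psi.\mu-\psi=c\cdot 1$. Now I iterate: the relation $(\phi.a).b=\phi.(ab)$ together with $\mu(1)=1$, which gives $1.\mu=1$ in $M$, produces by a straightforward induction $\psi.\mu^n=\psi+nc\cdot 1$ for every $n\in\N$. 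Combined with the Banach-algebra bound $\|\mu^n\|_{M_*}\le\|\mu\|^n=1$, which forces $\|\psi.\mu^n\|\le\|\psi\|$, the triangle inequality delivers $|nc|\le 2\|\psi\|$ for all $n\in\N$, and hence $c=0$.

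The hard part is clearly (4): the argument must invoke the WAP hypothesis (to extract the weakly convergent subnet and produce $\psi$) \emph{and} the fact that $\mu$ is a \emph{state}, not merely a contraction (to get $1.\mu=1$ and launch the iteration). Without either ingredient the route breaks down; parts (1)--(3), by contrast, are essentially exercises in normality and weak compactness once the slice-map viewpoint is adopted.
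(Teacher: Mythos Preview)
Your proposal is correct and follows essentially the same route as the paper. Parts (1), (2) and (4) are virtually identical to the paper's arguments (the paper checks positivity and self-adjointness of $\mu.x$ by pairing with states rather than via the slice map, and in (4) writes the iteration as a telescoping sum $y.\mu^k-y=\sum_{j=1}^k (y.\mu-y).\mu^{j-1}$ instead of by induction, but these are the same computation). In (3) the paper extracts a weakly convergent subnet of $(\mu_\a.x)$ to split a given limit point, whereas you observe directly that $\overline{P.x}+\overline{P.y}$ is weakly compact and hence closed; both rest on the same weak compactness coming from the WAP hypothesis, so the difference is purely cosmetic.
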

\begin{proof}
\begin{enumerate}
\item If $x\ge0$ then $\nu(\mu.x) = (\nu\mu)(x)\ge0$ for every $\mu,\nu\in P$, so that $\mu.x\ge0$. It follows that $\overline{P.x}\subset M_+$.
\item Let $x_+, x_-$ be the positive and negative part of $x$ respectively. As in (1), we notice that $\mu.x_\pm$ is positive on $P$ for every $\mu$, so that $\mu.x$ is real-valued. If $c1\in \overline{P.x}$, then the constant function $c$ on $P$ is the pointwise limit of a net of functions $\mu_\a.x$ with $\mu_\a\in P$, and it is real-valued as well.
\item If $z\in \overline{P.(x+y)}$ then there exists a net $(\mu_\a)\subset P$ such that $\mu_\a.(x+y)\to z$. By weak compactness of $\overline{P.x}$, passing to a subnet if necessary, we can assume that $\mu_\a.x$ converges to $u\in \overline{P.x}$. Now $\mu_\a.y = \mu_\a.(x+y)-\mu_\a.x \to z-u\in \overline{P.y}$.
\item Suppose that $(\mu_\a)$ is a net in $P$ such that $\mu_\a.(x.\mu-x)\to c1$, $c\in\C$. Passing to a subnet if necessary, we can assume that $\mu_\a.x\to y\in \overline{P.x}$. Next, $\mu_\a.(x.\mu) = (\mu_\a.x).\mu\to y.\mu$, which implies $y.\mu-y=c1$. Moreover,
$$
y.\mu^k-y = \sum_{j=1}^k (y.\mu-y).\mu^{j-1} = kc1,
$$
while the norm of this expression is bounded by $\|\mu^k.y\|+\|y\| \le 2\|x\|$. We conclude that $c=0$.
\end{enumerate}
\end{proof}

Denote $WAP_{iso,l}^{sa}(M)=\{ x\in WAP_{iso,l}(M): x=x^*\}$.
Now we can set, for $x\in WAP_{iso,l}^{sa}(M)$,
$$
p(x) = \sup\{ c\in\R : c1\in \overline{P.x}\},
$$
knowing by Proposition \ref{cxP} that this set of constants is nonempty.

We have the following properties:
\begin{lm}\label{p-properties}
For every $x, y\in WAP_{iso,l}^{sa}(M)$, 
\begin{enumerate}
\item $p(cx)=cp(x)$, $c\ge0$;
\item $|p(x)|\le \|x\|$;
\item $p(x+y)\le p(x)+p(y)$;
\item $p(x)\ge0$ if $x\ge0$.
\end{enumerate}
\end{lm}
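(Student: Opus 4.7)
Items (1), (2) and (4) are direct. For (1), $\overline{P.(cx)} = c\,\overline{P.x}$ when $c \ge 0$, so the set of real numbers $\alpha$ with $\alpha 1 \in \overline{P.(cx)}$ is exactly $c$ times the corresponding set for $x$, giving $p(cx)=cp(x)$ (with $p(0)=0$ by inspection). For (2), $\|\mu.x\|\le \|x\|$ for every $\mu\in P$ together with weak lower semicontinuity of the norm on $M$ yields $|c|=\|c1\|\le \|x\|$ whenever $c1\in \overline{P.x}$; combined with nonemptiness from Proposition \ref{cxP} this gives $|p(x)|\le \|x\|$. For (4), if $x\ge 0$ then Lemma \ref{Px-properties}(1) forces every such $c$ to be nonnegative, so $p(x)\ge 0$.

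\textbf{The substantive item is (3).} Fix $d\in\R$ with $d1\in \overline{P.(x+y)}$; the aim is to show $d\le p(x)+p(y)$. Choose a net $(\mu_\alpha)\subset P$ with $\mu_\alpha.(x+y)\to d1$ weakly. Weak compactness of $\overline{P.x}$ (from $x\in WAP(M)$) permits me, after passing to a subnet, to assume $\mu_\alpha.x\to u\in \overline{P.x}$, and then automatically $\mu_\alpha.y\to v := d1-u\in \overline{P.y}$.

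\textbf{The main obstacle} is that I would like to apply Proposition \ref{cxP} to $u$, but $WAP_{iso,l}(M)$ is not known to be weakly closed. The remedy is to transport the representation of $x$ to $u$. Let $(V,\pi,\xi,\eta)$ be associated to $x$ with isometric-onto generator $U$. Since $\xi$ is contractive and $V$ is reflexive, a further subnet yields $\xi(\mu_\alpha)\to v_0$ weakly in $V$, with $\|v_0\|\le1$. Passing to the weak limit in $\nu(\mu_\alpha.x)=\langle \pi(\nu)\xi(\mu_\alpha),\eta\rangle$ produces $\nu(u)=\langle \pi(\nu)v_0,\eta\rangle$ for every $\nu\in M_*$. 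Setting $\tilde\xi(\nu):=\pi(\nu)v_0$ gives a contractive linear left-covariant map from $M_*$ to $V$, so $(V,\pi,\tilde\xi,\eta)$ is a quadruple associated to $u$. The generator $U$ depends only on $\pi$, hence is unchanged and still isometric onto, placing $u\in WAP_{iso,l}(M)$.

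\textbf{Conclusion of (3).} Proposition \ref{cxP} applied to $u$ furnishes $a\in\R$ with $a1\in \overline{P.u}$; weak continuity of the action gives $P.u\subset \overline{P.x}$, so $a1\in \overline{P.x}$ and $a\le p(x)$. Pick $(\rho_\beta)\subset P$ with $\rho_\beta.u\to a1$ weakly. Since $\rho_\beta.(d1)=d1$, we get $\rho_\beta.v=d1-\rho_\beta.u\to (d-a)1$ weakly, and each $\rho_\beta.v$ lies in $\overline{P.y}$ (because $v\in \overline{P.y}$ and $P$ preserves this closure), so $(d-a)1\in \overline{P.y}$. This gives $d-a\le p(y)$, hence $d\le a+p(y)\le p(x)+p(y)$.
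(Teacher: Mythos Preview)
Your proof is correct and follows the same strategy as the paper. In fact you supply a step the paper leaves implicit: you verify that the weak limit $u\in\overline{P.x}$ still lies in $WAP_{iso,l}(M)$ (by passing to a weak limit $v_0$ of $\xi(\mu_\alpha)$ in $V$ and keeping the same $\pi$ and hence the same isometric generator), which is precisely what is needed to invoke Proposition~\ref{cxP} for $u$; the paper simply writes ``Let $(\mu_\alpha)\subset P$ be a net such that $\mu_\alpha.u\to d1\in\R1$'' without spelling out this justification.
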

\begin{proof}
\begin{enumerate}
\item Obvious.
\item For every $\mu\in P$, we have $\|\mu.x\|\le\|\mu\|\,\|x\|=\|x\|$.
\item If $c1\in \overline{P.(x+y)}$, $c\in\R$, then by Lemma \ref{Px-properties} $c1=u+v$ with $u\in \overline{P.x}$, $v\in \overline{P.y}$. Let $(\mu_a)\subset P$ be a net such that $\mu_a.u\to d1\in \R1$. Then $\mu_\a.v \to (c-d)1\in \overline{P.y}$ so that $c1 = (c-d)1+d1\in \overline{P.x} + \overline{P.y}$.
\item Follows from Lemma \ref{Px-properties}(1).
\end{enumerate}
\end{proof}

\begin{thm}
On $WAP_{iso,l}(M)$ there exists a right invariant mean. If the comultiplication of $M$ is right nontrivial, then this mean is also left invariant.
\end{thm}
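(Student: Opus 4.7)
My plan is to construct $m$ by applying Hahn--Banach to the sublinear functional $p$ of Lemma \ref{p-properties}, in the spirit of Burckel's classical proof \cite{burckel}. Since $(\mu.1)(\nu)=(\nu\mu)(1)=\nu(1)\mu(1)=\nu(1)$ for every $\mu\in P$, one has $\mu.1=1$, hence $\overline{P.1}=\{1\}$, and thus $p(1)=1$, $p(-1)=-1$. I set $m_0(c\cdot 1)=c$ on $\R\cdot 1\subset WAP_{iso,l}^{sa}(M)$, which is majorised by $p$, and extend it by Hahn--Banach to a real-linear $m\colon WAP_{iso,l}^{sa}(M)\to\R$ with $m\le p$. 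Lemma \ref{p-properties}(2) combined with $m(x)\ge -p(-x)$ yields $|m(x)|\le\|x\|$, so $m$ is norm-continuous. For positivity, if $x\ge 0$ then by Lemma \ref{Px-properties}(1) every constant in $\overline{P.(-x)}=-\overline{P.x}$ is nonpositive, so $p(-x)\le 0$, whence $m(x)\ge -p(-x)\ge 0$. One finally extends $m$ complex-linearly to $WAP_{iso,l}(M)$ via the self-adjoint decomposition; this requires $WAP_{iso,l}(M)$ to be $*$-closed, which can be verified by constructing a conjugate reflexive representation on $\bar V$ equipped with the conjugate isometric generator.

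For right invariance, fix a state $\mu\in P$. States are Hermitian ($\mu^*=\mu$), and using $(\mu\nu)^*=\mu^*\nu^*$ one verifies that $x.\mu$ is self-adjoint whenever $x$ is. Lemma \ref{Px-properties}(4) asserts that the only real constant in $\overline{P.(x.\mu-x)}$ is $0$, so $p(x.\mu-x)\le 0$ and $p(x-x.\mu)\le 0$, which forces $m(x.\mu-x)=0$, i.e.\ $m(x.\mu)=m(x)=\mu(1)\,m(x)$. Linearity in $\mu$ extends this to every $\mu\in M_*$.

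For the second assertion, assume $\Delta$ is right nontrivial. Proposition \ref{cPx} then provides for each $x\in WAP_{iso,l}(M)$ a constant $c\in\C$ with $c\,1\in\overline{x.P}$. The key claim is that $m(x)=c$, which in particular forces the constant to be unique. The orbit $\{x.\nu:\nu\in P\}$ is convex, since $P$ is convex and $\nu\mapsto x.\nu$ is affine, so its weak and norm closures coincide; hence there is a sequence $\tau_n\in P$ with $x.\tau_n\to c\,1$ in norm. By right invariance, $m(x.\tau_n)=\tau_n(1)\,m(x)=m(x)$ for every $n$, while norm continuity of $m$ gives $m(x.\tau_n)\to m(c\,1)=c$; therefore $m(x)=c$. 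For any $\mu\in P$, the bimodule identity $(\mu.x).\tau_n=\mu.(x.\tau_n)$ together with boundedness of $y\mapsto\mu.y$ gives $(\mu.x).\tau_n\to\mu.(c\,1)=c\mu(1)\,1=c\,1$ in norm, so $c\,1\in\overline{(\mu.x).P}$. Applying the uniqueness assertion to $\mu.x$ yields $m(\mu.x)=c=m(x)=\mu(1)\,m(x)$, and linearity in $\mu$ extends the identity to all of $M_*$.

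The delicate step is the bridge from the left-orbit functional $p$ (on which $m$ is built) to the constant of Proposition \ref{cPx}, which lives in the right orbit $\overline{x.P}$. Mazur's theorem is essential: $m$ is only norm-continuous, not weakly continuous, so weak convergence $x.\nu_\a\to c\,1$ alone would not yield $m(x.\nu_\a)\to c$; the upgrade to a norm-convergent sequence via convexity of the orbit is what makes the identification $m(x)=c$ possible. Once that bridge is in place, the module identity $(\mu.x).\tau=\mu.(x.\tau)$ transports the constant intact, and Example \ref{trivial-comult} explains why right nontriviality cannot be dropped in the left-invariance conclusion.
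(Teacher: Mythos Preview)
Your argument is correct and, for the construction of $m$ via Hahn--Banach over $p$, positivity, and right invariance, it is essentially identical to the paper's proof.

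For left invariance under right nontriviality you take a slightly different route. The paper argues by a two-sided sandwich: picking $d1\in\overline{P.x}$ and $c1\in\overline{x.P}$, it chooses sequences $(\mu_n),(\nu_m)\subset P$ with $\mu_n.x\to d1$ and $x.\nu_m\to c1$ in norm, estimates $\mu_n.x.\nu_m$ from both sides to force $c=d$, and then uses $\overline{P.(\mu.x)}\subset\overline{P.x}$ to conclude. You instead feed the already established right invariance and norm continuity of $m$ back into the right orbit: from $x.\tau_n\to c1$ you get $m(x)=\lim m(x.\tau_n)=c$ directly, and then transport $c1$ into $\overline{(\mu.x).P}$ via $(\mu.x).\tau_n=\mu.(x.\tau_n)$. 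Both arguments rest on the same Mazur upgrade from weak to norm convergence of the convex orbit; yours is a little more streamlined since it bypasses the separate comparison with $\overline{P.x}$ and the ``clearly $m(x)$'' step. A minor point: both you and the paper pass over the $*$-closedness of $WAP_{iso,l}(M)$ needed for the complex-linear extension; you at least flag it and sketch the conjugate-representation construction.
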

\begin{proof}
Exactly as in the classical case \cite[Theorem 1.25]{burckel}, the properties in Lemma \ref{p-properties} allow us to apply a version of the Hahn-Banach theorem to obtain a real-valued linear functional $m$ on $WAP_{iso,l}^{sa}(M)$ such that $m(c1)=c$, $c\in\R$, and $m(x)\le p(x)$ for any $x\in WAP_{iso,l}^{sa}(M)$. We extend then $m$ to $WAP_{iso,l}(M)$ by linearity.

By Lemma \ref{Px-properties}(1), if $x\ge0$ then $p(-x)\le0$ so that $m(-x)\le0$, which implies $m(x)\ge0$. Thus, $m$ is positive.

By Lemma \ref{Px-properties}(4), for every $x\in WAP_{iso,l}^{sa}(M)$ and $\mu\in P$ we have $p(x.\mu-x)=0$. This implies that $m(x.\mu)\le m(x)$ and similarly $m((-x).\mu)\le m(-x)$ whence $m(x.\mu)\ge m(x)$. Together, these imply the right invariance of $m$ on $WAP_{iso,l}^{sa}(M)$, and by linearity on $WAP_{iso,l}(M)$.

If now $\Delta$ is right nontrivial, then by Proposition \ref{cPx} for every $x\in WAP_{iso,l}(M)$ there exists a constant $c\in\C$ such that $c1\in\overline {x.P}$. Let $d\in\C$ be such that $d\in \overline{P.x}$. Since $\overline{P.x}$ and $\overline{x.P}$ are norm closures, we can choose sequences $(\mu_n),(\nu_m)$ in $P$ such that $\mu_n.x\to d1$, $x.\nu_m\to c1$ in norm. For $\e>0$, let $n,m$ be such that $\|\mu_n.x-c1\|<\e$ and $\|\nu_m.x-c1\|<\e$. Then $\|\mu_n.x.\nu_m-c\|<\e$ and $\|\mu_n.x.\nu_m-d\|<\e$; $\e$ being arbitrary, it follows that $c=d$. Thus, there is a unique  constant in $\overline{x.P}\cup \overline{P.x}$, which is clearly $m(x)$. Since $\overline{P.(\mu.x)}\subset \overline{P.x}$ for any $\mu\in P$, the unique constant in $\overline{P.(\mu.x)}$ is also $m(x)$, so that $m$ is also left invariant.
\end{proof}

Of course there is a symmetric theorem which states that $WAP_{iso,r}(M)$ admits a left invariant mean, and if the comultiplication of $M$ is left nontrivial, this mean is also right invariant. Moreover, these means extend by continuity to the norm closure of the respective spaces in $M$.

It is well known that in $L^\infty(\G)$, where $\G$ is a locally compact quantum group, the comultiplication is both left and right nontrivial, so that the spaces $WAP_{iso,l}(L^\infty(\G))$ and $WAP_{iso,r}(L^\infty(\G))$ have two-sided invariant means.

\section{The commutative case}

In this section we define another space $WAP_{cb*}(M)$. It has sense only in the presence of an involution on $M_*$, so for example, it is not defined on $C_b(S)^{**}$ for a general semigroup $S$. In return, every locally compact quantum group, and moreover its universal version do fall into this category. Maybe most general class of such algebras is the class of quantum semigroups with involution \cite{qsi}, but we do not go here up to that generality. The space $WAP_{cb*}(M)$ is close to $WAP_{cb}(M)$, which is shown by the fact that they are trivially equal if $M=L^\infty(\G)$ is a Kac algebra.

\subsection{Automatic $\ell_2$-boundedness}

Let $M$ be a commutative Hopf--von Neumann algebra. We consider as always $M_*$ with the dual structure of an operator space, which is the maximal structure \cite{effros-ruan}. It is known that every bounded map from $M_*$ into an operator space is automatically completely bounded. As $B(V)$ is not an operator space in general, we have to reprove the same fact in our context.

\begin{prop}
Let $V$ be a Banach space and $E$ an operator space with the maximal structure. Every bounded map from $E$ to $B(V)$ is $\ell_2$-bounded.
\end{prop}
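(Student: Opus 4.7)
The strategy exploits the defining universal property of the maximal operator space structure: any bounded linear map from $E$ into $B(H)$ for a Hilbert space $H$ is automatically completely bounded, with cb-norm equal to its norm. Given $\pi:E\to B(V)$ bounded, the plan is to factor $\pi$ through suitable finite-dimensional Hilbert spaces associated to pairs of test vectors, apply the max property there, and transfer the resulting estimate back to $\pi$ itself.

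Concretely, fix $n\in\N$ and $u=(u_{ij})\in M_n(E)$. By Banach space duality,
$$\|\phi_n((\pi(u_{ij})))\|_{B(\ell_{2,n}\otwo V)} = \sup\Big|\sum_{i,j}\langle\pi(u_{ij})\xi_j,\eta_i\rangle\Big|,$$
where the supremum runs over unit vectors $\xi\in\ell_{2,n}\otwo V$ and $\eta\in\ell_{2,n}\otwo V^*$. For each such pair, I associate the contractive operators $\Xi:\ell_{2,n}\to V$ with $\Xi(e_j)=\xi_j$ and $\Theta:V\to\ell_{2,n}$ with $(\Theta v)_i=\eta_i(v)$, and define $T_{\xi,\eta}:E\to B(\ell_{2,n})$ by $T_{\xi,\eta}(a)=\Theta\pi(a)\Xi$; an elementary estimate gives $\|T_{\xi,\eta}\|\le \|\pi\|$.

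Since $B(\ell_{2,n})$ is a genuine operator space, the max property applies to $T_{\xi,\eta}$ and yields
$$\|(T_{\xi,\eta}(u_{ij}))\|_{M_n(B(\ell_{2,n}))}\le \|\pi\|\,\|u\|_{M_n(E)}.$$
The entries of this $n\times n$ block matrix are $(T_{\xi,\eta}(u_{ij}))_{kl}=\langle\pi(u_{ij})\xi_l,\eta_k\rangle$, so the desired scalar $\sum_{i,j}\langle\pi(u_{ij})\xi_j,\eta_i\rangle$ is already visible among these entries. The last step is to identify this scalar with a pairing controlled by the block-matrix norm. The cleanest route is to observe the factorisation $(T_{\xi,\eta}(u_{ij}))=(\id\otimes\Theta)\,(\pi(u_{ij}))\,(\id\otimes\Xi)$ of operators between $\ell_{2,n}\otwo\ell_{2,n}$ and $\ell_{2,n}\otwo V$, which realises $(\pi(u_{ij}))$ as a "lift" of $(T_{\xi,\eta}(u_{ij}))$ and lets one read the bound directly from the operator-norm inequality rather than via an entrywise identification.

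I expect the main obstacle to lie in this last identification. A brute-force entrywise pairing of the $n^2\times n^2$ matrix $(T_{\xi,\eta}(u_{ij}))$ with diagonal-type vectors in $\ell_{2,n^2}$ introduces an unwanted factor of $n$ and would only prove a bound too weak for $\ell_2$-boundedness. Getting the correct constant requires using the factorisation through the rank-$n$ operators $\Xi$ and $\Theta$ in a structural way, so that the lift is exact and no constants are lost. Once this identification is cleanly set up, taking the supremum over admissible $\xi,\eta$ closes the argument and yields $\|(\pi(u_{ij}))\|\le \|\pi\|\,\|u\|_{M_n(E)}$, which is in fact $\ell_2$-contractivity with constant $\|\pi\|$.
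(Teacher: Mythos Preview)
Your strategy---compress $\pi$ to a map into $B(\ell_{2,n})$ via contractive operators $\Xi,\Theta$ and invoke the universal property of the maximal structure---is exactly the paper's. The gap is in the final extraction. The factorisation $(T_{\xi,\eta}(u_{ij}))=(\id\otimes\Theta)\,(\pi(u_{ij}))\,(\id\otimes\Xi)$ bounds $\|(T_{\xi,\eta}(u_{ij}))\|$ by $\|(\pi(u_{ij}))\|$, which is the wrong direction. To recover the scalar $\sum_{i,j}\langle\pi(u_{ij})\xi_j,\eta_i\rangle$ from the block matrix $(T_{\xi,\eta}(u_{ij}))$ you are forced to pair with the diagonal vector $\zeta=\sum_k e_k\otimes e_k\in\ell_{2,n}\otimes\ell_{2,n}$, because $\xi=(\id\otimes\Xi)\zeta$ and $\eta=(\id\otimes\Theta)^*\zeta$; so the scalar equals $\langle(T_{\xi,\eta}(u_{ij}))\zeta,\zeta\rangle$ and nothing sharper. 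Since $\|\zeta\|=\sqrt n$, your ``structural'' factorisation re-derives precisely the $n$-bound you were trying to avoid. Calling $(\pi(u_{ij}))$ a lift does not help: lifts have larger norms, not smaller ones.

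The paper removes the factor of $n$ by \emph{normalising first}: it defines $f:E\to M_n(\C)$ by $f(a)_{kl}=\langle\pi(a)\tilde\xi_k,\tilde\eta_l\rangle$ with $\tilde\xi_k=\xi_k/\|\xi_k\|$ and $\tilde\eta_l=\eta_l/\|\eta_l\|$ (zero when the original vector vanishes). After applying the max property to $f$, the desired scalar is recovered as $\sum_{i,j,k,l}f(u_{ij})_{kl}\,x_{ik}\bar y_{jl}$ with the \emph{weighted} diagonal vectors $x_{ik}=\delta_{ik}\|\xi_k\|$ and $y_{jl}=\delta_{jl}\|\eta_l\|$. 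The whole point of the normalisation is that now $\|x\|_{\ell_{2,n^2}}=(\sum_k\|\xi_k\|^2)^{1/2}=\|\xi\|=1$ and likewise $\|y\|=1$, so the pairing introduces no factor of $n$. This redistribution of the norms of $\xi_k,\eta_l$ from the map $f$ into the test vectors $x,y$ is the idea missing from your argument.
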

\begin{proof}
We should check the condition in Definition \ref{def-l2-bd}: there exists a constant $C\ge0$ such that $\|\big( \pi(u_{ij}) \big)\| \le C\|u\|$ for every $n\in\N$ and $u=(u_{ij})\in M_n(E)$. The norm of $\pi(u)$ is the operator norm on $\ell_{2,n}\otwo V$:
\begin{align*}
\|\pi(u)\| &= \sup_{\|\xi\|=1} \| \sum_{k,j=1}^n e_j\otimes \pi(u_{jk})\xi_k \|
 = \sup_{\|\xi\|=1, \|\eta\|=1} \sum_{k,j=1}^n \langle \pi(u_{jk})\xi_k, \eta_j \rangle,
\end{align*}
where we write $\xi=\sum\limits_{k=1}^n e_k\otimes \xi_k\in \ell_{2,n}\otwo V$ and $\eta=\sum\limits_{k=1}^n e_k\otimes \eta_k\in \ell_{2,n}\otwo V^*$.
Fix $\xi,\eta$ and set $\tilde\xi_k=\xi_k/\|\xi_k\|$ (or 0 if $\xi_k=0$), and $\tilde\eta_k=\eta_k/\|\eta_k\|$ (respectively 0 if $\eta_k=0$). 
Define now a map $f: E\to M_n(\C)$ by
$$
f(a)_{kl} = \langle \pi(a) \tilde \xi_k,\tilde\eta_l\rangle.
$$
It is bounded, and for $a\in E$
$$
\|f(a)\| = \sup_{\|x\|=\|y\|=1} \sum_{k,l} \langle \pi(a) \tilde \xi_k,\tilde\eta_l\rangle x_k \bar y_l
\le \|\pi\| \,\|a\| \sup_{\|x\|=\|y\|=1} \|(\tilde \xi_k x_k)\| \,\|(\tilde\eta_l\bar y_l)\|
= \|\pi\| \,\|a\| \,\|\xi\| \,\|\eta\|.
$$
It follows that $f$ is completely bounded. For $u\in M_n(E)$ we get, using the notations of \cite{effros-ruan} where $f_n(u)$ is the matrix with coefficients $(f(u_{ij})_{kl})$, the inequality $\|(f_n(u_{ij})\| \le \|\pi\| \,\|\xi\| \,\|\eta\|\, \|u\| = \|\pi\|\,\|u\|$. Now we can use this to estimate the norm of $\pi(u)$ in $M_{n^2}(\C)$. Define $x,y\in \C^{n^2}$ by  $x_{ik} = \delta_{ik} \|\xi_k\|$ and $y_{jl} = \delta_{jl} \|\eta_l\|$. Then $\|x\|=\|\xi\|=1$ and $\|y\|=\|\eta\|=1$, and
\begin{align*}
\sum_{k,j=1}^n \langle \pi(u_{jk})\xi_k, \eta_j \rangle &= \sum_{k,j=1}^n \langle \pi(u_{jk})\tilde\xi_k, \tilde\eta_j \rangle \|\xi_k\| \,\|\eta_l\| = \sum_{i,j,k,l=1}^n \langle \pi(u_{ij})\tilde\xi_k, \tilde\eta_l \rangle x_{ik} \bar y_{jl}
\\ & = \sum_{i,j,k,l=1}^n f(u_{ij})_{kl} x_{ik} \bar y_{jl}
\le \|f_n(u) \| \|x\| \|y\| \le \|\pi\| \,\|u\|. 
\end{align*}
This proves the proposition.
\end{proof}

\begin{cor}
If $M$ is commutative then $WAP(M)=WAP_{cb}(M)$.
\end{cor}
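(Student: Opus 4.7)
The plan is to derive the corollary as an essentially immediate consequence of the preceding proposition. First I would record the relevant operator space fact that the author has already invoked in the paragraph introducing the subsection: since $M$ is a commutative von Neumann algebra, it carries the minimal operator space structure as a commutative $C^*$-algebra, and therefore $M_*$, with its dual operator space structure, carries the maximal one. Thus the preceding proposition applies to any bounded linear map from $M_*$ into $B(V)$, for any Banach space $V$.

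Next I would take an arbitrary $x\in WAP(M)$ and invoke the coefficient description recalled just after Theorem~\ref{left-cov}: there exist a reflexive Banach space $V$, a contractive homomorphism $\pi:M_*\to B(V)$, a contractive linear map $\xi:M_*\to V$ with $\xi(\mu\nu)=\pi(\mu)\xi(\nu)$, and $\eta\in V^*$ such that $\mu(x)=\langle \xi(\mu),\eta\rangle$. Applying the preceding proposition to $\pi$, I conclude that $\pi$ is $\ell_2$-bounded. A look at the final estimate in its proof shows that the $\ell_2$-bound is controlled by $\|\pi\|\le1$, so $\pi$ is in fact $\ell_2$-contractive. This places $x$ into $WAP_{cb,l}(M)$, and the symmetric argument using the right covariant version of Theorem~\ref{left-cov} places it into $WAP_{cb,r}(M)$; hence, in the (coamenable) setting in which the common space $WAP_{cb}(M)$ of Definition~\ref{def-wap-cb-iso} is defined, one gets $WAP(M)\subset WAP_{cb}(M)$.

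The reverse inclusion is immediate from the definitions: an $\ell_2$-contractive covariant representation on a reflexive Banach space is in particular a contractive covariant representation on a reflexive Banach space, and the coefficients of such representations constitute $WAP(M)$ by the paragraph after Theorem~\ref{left-cov}. Combined, the two inclusions give the claimed equality.

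I do not anticipate a genuine obstacle. The only substantive point is the identification of the operator space structure of $M_*$ with the maximal one in the commutative case, and this is already stated in the subsection introduction; everything else is a short bookkeeping step feeding into the proposition just proved.
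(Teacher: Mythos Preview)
Your proposal is correct and follows exactly the approach the paper intends: the corollary is stated without proof because it is immediate from the preceding proposition, and you have spelled out precisely the intended reasoning (maximal structure on $M_*$, apply the proposition to the $\pi$ furnished by Theorem~\ref{left-cov}, observe from the final line of its proof that the $\ell_2$-bound is $\|\pi\|\le1$, and note the trivial reverse inclusion).
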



\subsection{Multiplication in $WAP_{cb}(M)$}

As it was said before, it is unknown whether $WAP_{cb}(M)$ is a subalgebra in $M$.
In the commutative case this is however true, by the equality of this space to $WAP(M)$. In addition to this fact, we will need an explicit description of the arising representations and their generators.

Let $M$ be commutative and coamenable, properties satisfied by $L^\infty(G)$ and $C_0(G)^{**}$ (the dual of the measure algebra) of a locally compact group $G$. Fix $x,y\in WAP_{cb}(M)$, and let $(V_i,\pi_i,\xi_i,\eta_i)$, $i=1,2$, be associated to $x$ and $y$ respectively. By assumption, $\xi_1$ and $\xi_2$ are constant.

Intuitively, the representation corresponding to $xy$ is $\pi=(\pi_1\otimes\pi_2)\Delta_{M_*}$. But the difference of available tensor products does not allow us to give a strict sense to this equality. Instead, we can construct a generator for $\pi$, similarly to \cite{daws-reflexive}, and then deduce $\pi$ from it. All necessary properties are then to be obtained by coordinate calculations.

Let $\otimes_r$ be a tensor product which preserves reflexivity, for example one of the Chevet-Saphar tensor products \cite{ryan}. We set $V=V_1\otimes_{r} V_2$. The maps $\tilde\pi_1=\pi_1\otimes 1$ and $\tilde\pi_2=1\otimes\pi_2$ are bounded homomorphisms from $M_*$ to $B(V)$. They have therefore generators $\tilde U_1,\tilde U_2 \in B(H\otwo V)$. Set $U=\tilde U_1\tilde U_2$, $\xi=\xi_1\otimes\xi_2$, $\eta=\eta_1\otimes\eta_2$. One defines first a map $\pi$ on $B(H)_*$ via $U$ by Definition \ref{def-generator} (extended by linearity and continuity). This gives, for $k,h\in H$, $v_i\in V_i$, $\phi_i\in V_i^*$, $i=1,2$:
\begin{align*}
\langle \pi(\mu_{kh})(v_1\otimes v_2), \phi_1\otimes \phi_2\rangle
&= \langle U(h\otimes v_1\otimes v_2), \bar k\otimes\phi_1\otimes \phi_2\rangle
= \langle \tilde U_2(h\otimes v_1\otimes v_2), \tilde U_1^*(\bar k\otimes\phi_1\otimes \phi_2)\rangle 
\\& = \langle \sum_\a e_\a\otimes \tilde \pi_2(\mu_{e_\a h}) (v_1\otimes v_2), 
 \sum_\b \bar e_\b\otimes \tilde\pi_1(\mu_{k e_\b})^*(\phi_1\otimes \phi_2)\rangle 
\\& = \langle \sum_\a e_\a\otimes v_1\otimes \pi_2(\mu_{e_\a h}) v_2, 
 \sum_\b \bar e_\b\otimes \pi_1(\mu_{k e_\b})^* \phi_1\otimes \phi_2\rangle 
\\& = \sum_\a  \langle \pi_1(\mu_{k e_\a}) v_1, \phi_1\rangle  \langle \pi_2(\mu_{e_\a h}) v_2, \phi_2 \rangle.
\end{align*}
If we denote $x_i(\mu) = \langle \pi_i(\mu) v_i, \phi_i \rangle$, $i=1,2$, then we get $x_{1,2}\in M$ and
$$
\langle \pi(\mu_{kh})(v_1\otimes v_2), \phi_1\otimes \phi_2\rangle =
 \sum_\a \mu_{k e_\a}(x_1) \mu_{e_\a h}(x_2) = \mu_{kh}(x_1 x_2)
$$
(the last equality follows from the scalar product decomposition in $H$), which shows in particular that this expression defines indeed a functional on $M_*$ and not just on $B(H)_*$, so that the map $\pi:M_*\to B(V)$ is well defined. Moreover, the equality 
$$
\langle \pi(\mu)(v_1\otimes v_2), \phi_1\otimes \phi_2\rangle
 = \mu(x_1 x_2)
$$
holds for every $\mu\in M_*$.

Next, fix $a,b,c,d\in H$. Since $v_{12,}$, $\phi_{1,2}$ above are arbitrary, we can decompose below $\pi(\mu_{cd})$ and $\pi(\mu_{ab})^*$ in weakly absolutely converging series and obtain
\begin{align*}
\langle \pi(\mu_{ab}) \pi(\mu_{cd}) & (v_1\otimes v_2), \phi_1\otimes \phi_2\rangle
 = \langle \pi(\mu_{cd})(v_1\otimes v_2), \pi(\mu_{ab})^* (\phi_1\otimes \phi_2)\rangle
\\&= \langle \sum_\a  \big( \pi_1(\mu_{c e_\a}) \otimes \pi_2(\mu_{e_\a d}) \big) (v_1\otimes v_2),
\sum_\b  \big( \pi_1(\mu_{a e_\b}) \otimes \pi_2(\mu_{e_\b b}) \big)^* (\phi_1\otimes \phi_2) \rangle
\\&= \langle \sum_{\a,\b}  \big( \pi_1(\mu_{a e_\b}) \pi_1(\mu_{c e_\a}) \otimes \pi_2(\mu_{e_\b b})\pi_2(\mu_{e_\a d}) \big) (v_1\otimes v_2), \phi_1\otimes \phi_2 \rangle
\\&= \langle \sum_{\a,\b}  \big( \pi_1(\mu_{a e_\b} \mu_{c e_\a}) \otimes \pi_2(\mu_{e_\b b} \mu_{e_\a d}) \big) (v_1\otimes v_2), \phi_1\otimes \phi_2 \rangle
\\& = \sum_{\a,\b} (\mu_{a e_\b} \mu_{c e_\a}) (x_1) (\mu_{e_\b b} \mu_{e_\a d}) (x_2)
\\& = \sum_{\a,\b} (\mu_{a e_\b} \otimes \mu_{c e_\a}) \big( \Delta(x_1) \big)
 (\mu_{e_\b b} \otimes \mu_{e_\a d}) \big( \Delta(x_2) \big)
\\& = \sum_{\a,\b} \langle \Delta(x_1) (e_\b \otimes e_\a),  a\otimes c \rangle
\langle  \Delta(x_2) ( b\otimes d), e_\b \otimes e_\a \rangle
\\&= \langle  \Delta(x_2) ( b\otimes d), \Delta(x_1)^* (a\otimes c) \rangle
= \langle  \Delta(x_1x_2) ( b\otimes d), a\otimes c \rangle
\\& = (\mu_{ab} \otimes \mu_{cd}) \big( \Delta(x_1x_2) \big)
= (\mu_{ab}\mu_{cd}) (x_1x_2) = \langle \pi( \mu_{ab}\mu_{cd} )(v_1\otimes v_2), \phi_1\otimes \phi_2\rangle.
\end{align*}
It follows that $\pi$ is a homomorphism.

\subsection{Equality $WAP(M)=WAP_{iso}(M)$}

\def\G{\mathbb{G}}

Though the final result of this section is proved only for the algebra $L^\infty(G)$ 
 on a locally compact group $G$, certain parts are valid in more generality.

Suppose that $M$ is a Hopf--von Neumann algebra such that an involution is defined on a subalgebra $M_{*\sharp}$ of $M_*$ containing all $\mu_{e_\a e_\b}$. This is the case if $M$ is a quantum semigroup with involution \cite{qsi}, but the main examples are $L^\infty(\G)$ and $C_0(\G)^{**}$ on a locally compact {\sl quantum } group $\G$.

Call an $\ell_2$-contractive representation $\pi$ of $M_*$ {\it admissible} if the map $\bar\pi: \mu\mapsto \pi(\mu^*)^*$ extends from $M_{*\sharp}$ to an $\ell_2$-contractive homomorphism on $M_*$. Let $WAP_{cb*}(M)$ be the space of coefficients of admissible reflexive representations of $M_*$.

This is close to the notion of admissibility of unitary representations of quantum groups as introduced by So\l tan \cite{soltan-bohr}. The question of whether every finite-dimensional unitary representation is admissible is now known as the admissibility conjecture and has not yet a final answer \cite{daws-bohr, das-daws-salmi}.

Suppose that $M$ is coamenable, $\pi$ is admissible, with the generator $U$, and let $\bar U$ be the generator of $\bar\pi$. For every $\xi\in V$, $\eta\in V^*$ we have a decomposition in an absolutely converging series
\begin{align}\label{UUbarveta}
\langle U(e_\a\otimes \xi), \bar U(e_\beta\otimes\eta)\rangle
&= \langle \sum_\gamma e_\gamma\otimes \pi(\mu_{e_\gamma e_\a}) \xi, \sum_\zeta e_\zeta\otimes \bar \pi(\mu_{e_\zeta e_\b}) \eta \rangle
\\& 
 = \sum_\gamma \langle \bar\pi(\mu_{e_\gamma e_\beta})^* \pi(\mu_{e_\gamma e_\a}) \xi,  \eta \rangle
 = \sum_\gamma \langle \pi(\mu_{e_\gamma e_\beta}^*\mu_{e_\gamma e_\a}) \xi,  \eta \rangle,
 \notag
\end{align}
with the estimate
\begin{align*}
\sum_\gamma |\langle \pi(\mu_{e_\gamma e_\beta}^*\mu_{e_\gamma e_\a}) \xi,  \eta \rangle|
&\le
 \|U(e_\a\otimes \xi)\|\, \|\bar U(e_\beta\otimes\eta)\| \le \|\xi\|\,\|\eta\|.
\end{align*}
If $x\in M$ is the corresponding coefficient of $\pi$, $\mu(x) = \langle \pi(\mu)\xi,\eta\rangle$, then
\begin{align*}
\sum_\gamma \langle \pi(\mu_{e_\gamma e_\beta}^*\mu_{e_\gamma e_\a}) \xi,  \eta \rangle
= \sum_\gamma (\mu_{e_\gamma e_\beta}^*\mu_{e_\gamma e_\a})(x).
\end{align*}
In this form, this expression does not depend on the realization of $x$, so we can set, for $x\in WAP_{cb*}(M)$,
\begin{equation}\label{hab}
h_{\a\b}(x) = \sum_\gamma \big( \mu_{e_\gamma e_\beta}^* \mu_{e_\gamma e_\a} \big) (x),
\end{equation}
the series converging absolutely. If we set $\|x\|_{cb} = \inf\|\xi\|\,\|\eta\|$, infimum taken over all $\ell_2$-bounded realizations of $x$ (this is a norm, similarly to $\|\cdot\|_{coe\!f}$), then we have $|h_{\a\b}(x)| \le \|x\|_{cb}$.

Similarly, we can define $h'_{\a\b}$ by
\begin{align*}
\langle \bar U^*(e_\beta\otimes  \xi), U^*(\bar e_\a\otimes \eta)\rangle
&= \langle \sum_\zeta e_\zeta\otimes \bar\pi(\mu_{e_\beta e_\zeta})^* \xi,
  \sum_\gamma \bar e_\gamma\otimes \pi(\mu_{e_\a e_\gamma})^* \eta \rangle
\\& = \sum_\gamma \langle \pi(\mu_{e_\a e_\gamma} \mu_{e_\beta e_\gamma}^*) \xi, \eta \rangle
 = : h'_{\a\b}(x)
\end{align*}
for $x\in WAP_{cb*}(M)$.

We calculate next (all series absolutely converging) with $x,y\in WAP_{cb*}(G)$
\begin{align*}
h_{\a\b} \big( xy \big)
& = \sum_\g \langle  \pi(\mu_{e_\g e_\b}^* \mu_{e_\g e_\a}) \xi, \eta\rangle
\\& = \sum_{\g,\zeta,\lambda}  \langle (\pi_1\otimes \pi_2) (\mu_{e_\g e_\zeta}^*\otimes \mu_{e_\zeta e_\b}^*) 
 (\pi_1\otimes \pi_2) \big( \mu_{e_\g e_\lambda} \otimes \mu_{e_\lambda e_\a}) \big( \xi_1\otimes \xi_2 \big), \eta_1\otimes \eta_2\rangle
\\& = \sum_{\g,\zeta,\lambda}  \langle \pi_1 (\mu_{e_\g e_\zeta}^* \mu_{e_\g e_\lambda} ) \xi_1, \eta_1 \rangle
\langle \pi_2 ( \mu_{e_\zeta e_\b}^* \mu_{e_\lambda e_\a}) \xi_2, \eta_2\rangle
\\& = \sum_{\zeta,\lambda}  h_{\lambda\zeta} (x) 
\langle \pi_2 ( \mu_{e_\zeta e_\b}^* \mu_{e_\lambda e_\a}) \xi_2, \eta_2\rangle.
\end{align*}

For the rest of the section, we consider the algebra $M=L^\infty(G)$ where $G$ is a locally compact group. Every representation of $M_*$ is admissible. We fix still a basis $(e_\a)$ in $L^2(G)$. The usual representation of $B(G)$ on $L^2(G)$ by pointwise multiplication is unitary (has a unitary generator $W$; it is given by $WF(s,t) = F(s^{-1}t,t)$, $F\in L^2(G\times G)$). It follows that for every $x\in B(G)$
$$
\sum_\gamma (\mu_{e_\gamma e_\beta}^* \mu_{e_\gamma e_\a})(x)
 = \sum_\gamma (\mu_{e_\beta e_\gamma} \mu_{e_\a e_\gamma}^*)(x) = \delta_{\a\b} x(e). 
$$
This is considered for example in general case in \cite{haar} or can be verified directly, similarly to the formula \eqref{UUbarveta}. We write below $\e(x)=x(e)$ for $x$ in the uniform closure $\bar B(G)$ of $B(G)$ (the Eberlein algebra), so that $h_{\a\b}(x) = \delta_{\a\b}\,\e(x)$ for $x\in B(G)$.
Since the uniform norm on $WAP(G)$ is equivalent to the norm $\|\cdot\|_{cb}$, the functionals $h_{\a\b}$ are continuous with respect to the uniform norm, and the equality $h_{\a\b} = \delta_{\a\b}\,\e$ holds on $\bar B(G)$.

If $x\in B(G)$, we have also
$$
h_{\a\b} \big( xy \big) = \sum_{\zeta,\lambda} \delta_{\zeta\lambda} \e(x) \langle \pi_2 ( \mu_{e_\zeta e_\b}^* \mu_{e_\lambda e_\a}) \xi_2 \big), \eta_2\rangle = \e(x) h_{\a\b}(y).
$$
For a given $y\in WAP(G)$, pick $x\in A(G)$ such that $\e(x) \ne0$, then we have $xy \in C_0(G)\subset \bar B(G)$, and
$$
h_{\a\b}(y) = \frac{ h_{\a\b} \big( xy \big)} { \e(x)}
= \delta_{\a\b} \frac{ \e\big( xy \big)} { \e(x)} =  \delta_{\a\b} \e(y).
$$
Returning to \eqref{UUbarveta}, we get
$$
\langle U(e_\a\otimes \xi), \bar U(\bar e_\beta\otimes\eta)\rangle = \delta_{\a\b} \e(x) = \delta_{\a\b} \langle \xi,\eta \rangle
 = \langle e_\a\otimes \xi,  \bar e_\b\otimes \eta \rangle.
$$
This is valid for all $\xi\in V$, $\eta\in V^*$, and extending this equality by linearity and continuity, we see that $\langle Us,  \bar Ut\rangle = \langle s, t \rangle$ for all $s\in H\otwo V$, $t\in \bar H\otwo V^*$, so that $\bar U^*U=\Id_{H\otwo V}$.
Similarly, we show that $h'_{\a\b}=\e \delta_{\a\b}$, and
\begin{align*}
\langle \bar U^*(e_\beta\otimes  \xi), U^*(e_\a\otimes \eta)\rangle
& = \langle e_\a\otimes \xi,  e_\b\otimes \eta \rangle,
\end{align*}
which implies $U \bar U^*=\Id_{H\otwo V}$.

$U$ is thus invertible, and $U,\bar U$ being contractive, $U$ is isometric onto. This proves the following:
\begin{cor}
If $G$ is a locally compact group, then $WAP_{iso}(M) = WAP(M)$ for $M=L^\infty(G)$.
\end{cor}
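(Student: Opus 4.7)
The plan is to assemble the computations from the preceding subsection into the stated equality. Given $x\in WAP(M)$ with $M=L^\infty(G)$, the commutative-case corollary just proved gives $WAP(M)=WAP_{cb}(M)$, so I may fix an $\ell_2$-contractive reflexive realization $(V,\pi,\xi,\eta)$ of $x$. Since $L^1(G)$ carries its standard involution and every representation of it is admissible, both the generator $U$ of $\pi$ and the generator $\bar U$ of $\bar\pi(\mu)=\pi(\mu^*)^*$ exist as bounded operators on $H\otwo V$. The task reduces to showing that $U$ is isometric onto; for this it suffices to establish $\bar U^*U=\Id_{H\otwo V}$ and $U\bar U^*=\Id_{H\otwo V}$, because combined with the contractivity of $U$ and $\bar U$ these identities force $U$ to be an isometric bijection.

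Via \eqref{UUbarveta} and its $U^*$-analogue, the two inverse relations reduce to evaluating the functionals $h_{\alpha\beta}$ and $h'_{\alpha\beta}$ on $WAP(G)$, and the aim is to prove $h_{\alpha\beta}=\delta_{\alpha\beta}\,\e$ (and similarly for $h'_{\alpha\beta}$). I would proceed in three stages. First, on the Fourier--Stieltjes algebra $B(G)$ the identity follows from the fact that the natural pointwise-multiplication representation of $B(G)$ on $L^2(G)$ has a unitary generator $W$ (explicitly, $WF(s,t)=F(s^{-1}t,t)$), which collapses the sum defining $h_{\alpha\beta}$ to $\delta_{\alpha\beta}\,\e$. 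Second, since the $\|\cdot\|_{cb}$-norm is equivalent to the uniform norm on $WAP(G)$, the functionals $h_{\alpha\beta}$ are uniform-continuous and therefore extend from $B(G)$ to its uniform closure, the Eberlein algebra $\bar B(G)$, preserving the identity.

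The decisive third stage is the passage from $\bar B(G)$ to all of $WAP(G)$. Here I use the product formula $h_{\alpha\beta}(xy)=\e(x)\,h_{\alpha\beta}(y)$ established above whenever $x\in B(G)$. For any $y\in WAP(G)$ I pick $x\in A(G)$ with $\e(x)\ne 0$; then $xy\in C_0(G)\subset\bar B(G)$, where the identity already holds, so dividing by $\e(x)$ yields $h_{\alpha\beta}(y)=\delta_{\alpha\beta}\,\e(y)$. A symmetric argument disposes of $h'_{\alpha\beta}$. Substituting these evaluations back into the bilinear pairings and extending by linearity and norm continuity to all of $H\otwo V$ and $\bar H\otwo V^*$ gives the two inverse relations, hence the isometric surjectivity of $U$, hence $x\in WAP_{iso}(M)$.

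The main obstacle I expect lies in the third stage: rigorously justifying that $A(G)\cdot WAP(G)\subset C_0(G)$, so that the product $xy$ falls inside a regime where the already-established identity applies, and confirming that the product formula for $h_{\alpha\beta}$ is a genuine equality on coefficients (i.e.\ independent of the particular $\ell_2$-contractive realizations of $x$ and $y$ one chooses). Once this module behaviour is pinned down, everything else is bookkeeping on top of the machinery built in the previous subsection.
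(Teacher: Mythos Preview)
Your proposal is correct and follows the paper's argument essentially step for step: the same reduction to $\bar U^*U=\Id$ and $U\bar U^*=\Id$, the same three-stage computation of $h_{\alpha\beta}$ on $B(G)$, then $\bar B(G)$, then $WAP(G)$ via the product formula with an $A(G)$ factor. The obstacle you flag is not a real difficulty: $A(G)\subset C_0(G)$ and $WAP(G)\subset C_b(G)$, so $A(G)\cdot WAP(G)\subset C_0(G)$ because $C_0(G)$ is an ideal in $C_b(G)$; and the realization-independence of $h_{\alpha\beta}$ is exactly what the paper observes when it writes $h_{\alpha\beta}(x)=\sum_\gamma(\mu_{e_\gamma e_\beta}^*\mu_{e_\gamma e_\alpha})(x)$ as a functional on $x$ alone.
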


There remains of course the question whether the equality $WAP_{iso}(M) = WAP(M)$ holds on a larger class of Hopf--von Neumann algebras.


\section{Appendix}

This section is intended for prepublication only. We show by detailed calculations why the axioms of the operators space do not hold for three natural choices of matrix norms on the space $B(V)$ with a general Banach space $V$.

Let $n$ be a natural number, and let $\ell_{2,n}$ denote the space $\C^n$ with the euclidean norm. The space of matrices $M_n(B(V))$ is linearly isomorphic to the space $B(\ell_{2,n}\otimes V)$ via the isomorphism $\phi_n$ given by \eqref{phin}. The resulting matrix norms on $M_n\big( B(V) \big)$ depend on the choice of the norm on $\ell_{2,n}\otimes V$.

Consider first the tensor product $\otwo$ introduced in Section 2. The first axiom of an operator space~\cite{effros-ruan}
$$
\|u\oplus v\| = \max( \|u\|, \|v\|)
$$
 is easy to check: if $u\in M_n(B(V))$, $v\in M_m(B(V))$, then 
\begin{align*}
\|u\oplus v\| &= \sup_{\sum_{k=1}^{n+m} \|\xi_k\|^2\le1} \big\| \sum_{k=1}^n e_k\otimes \sum_j u_{kj}\xi_j + \sum_{k=n+1}^{n+m} e_k\otimes \sum_j v_{kj}\xi_j \big\|
\\&= \sup_{\sum_k \|\xi_k\|^2\le1} \big( \sum_{k=1}^n \|\sum_j u_{kj}\xi_j\|^2 + \sum_{k=n+1}^{n+m} \|\sum_j v_{kj}\xi_j \big\|^2 \big)^{1/2}
\\&\le \sup_{\sum_k \|\xi_k\|^2\le1} \big( \|u\|^2 \sum_{j=1}^n\|\xi_j\|^2 + \|v\|^2 \sum_{k=n+1}^{n+m} \|\xi_j \big\|^2 \big)^{1/2}
\le \max( \|u\|, \|v\|).
\end{align*}
Choosing $\xi$ to maximize each of the two norms separately, we see that this is in fact an equality.

The second axiom reads
$$
\|\a u \b\| \le \|\a\|\,\|u\|\,\|\b\|
$$
for $\a\in M_{n,m}(\C)$, $\b\in M_{m,n}(\C)$, $u\in M_m(B(V))$.
For $\xi=\sum_j e_j\otimes\xi_j\in \ell_{2,n}\otwo V$
$$
\phi_n(\a u\b)\xi = \sum_i e_i\otimes \sum_{j,k,l} \a_{ij} u_{jk} \b_{kl} \xi_l.
$$
Suppose first that $m=n$. The map $a: \ell_{2,n}\otwo V \to \ell_{2,n}\otwo V$, $a(\xi) = \sum_i e_i\otimes\sum_j \a_{ij}\xi_j$ can alternatively be written as $a(\xi) = \sum_j \a e_j\otimes\xi_j = (\a\otimes\Id)(\sum_j e_j\otimes\xi_j)$, so we are to show that $\|(\a\otimes\Id) u (\b\otimes\Id)\| \le \|\a\|\,\|u\|\,\|\b\|$. 

The problem is that $\otwo$ is not a tensor norm, that is, in general $\|\a\otimes\Id\|\ne\|\a\|$. An easy example is provided by $n=2$ and $\a e_1=e_1-e_2$, $\a e_2=e_1+e_2$. For every $u,v\in V$ we have
\begin{align*}
\|e_1\otimes u+e_2\otimes v\|^2 &= \|u\|^2+\|v\|^2,\\
\|(\a\otimes \Id)(e_1\otimes u+e_2\otimes v)\|^2 &= \|(e_1-e_2)\otimes u+(e_1+e_2)\otimes v\|^2
\\& = \|e_1\otimes (u+v)+e_2\otimes (v-u)\|^2 = \|u+v\|^2+\|u-v\|^2.
\end{align*}
If we had $\|\a\otimes\Id\|\le\|\a\|=\sqrt2$, then we would have $\|u+v\|^2+\|u-v\|^2 \le 2 (\|u\|^2+\|v\|^2)$ for any $u,v\in V$. This holds only if $V$ is a type 2 space, which is not the case already for $\ell_p$, $p<2$. Thus, the axiom does not hold in general.

Next, we consider the Chevet-Saphar tensor products $\otimes_{d_2}$ or $\otimes_{g_2}$, and refer to \cite{ryan} for the definitions. In this case, the second axiom will hold, as both $d_2$ and $g_2$ are tensor norms.

The problem will be the first one. Let us start with the norm $g_2$ and $V=\ell_p$, $1<p<2$; we denote by $p'$ the conjugate exponent to $p$. Let $\{ \tilde e_1,\tilde e_2\}$ be the standard basis of $\ell_{2,2}$, and $\{e_n: n\in\N\}$ of $\ell_p$. Set $n=m=1$, $ue_1=e_2$, $ue_2=e_1$, $ue_n=e_n$ if $n>2$. Set also  $v=\Id$. We have $\|u\|=\|v\|=1$. Let us estimate the norm of $u\oplus v$ on $\ell_{2,2}\otimes_{g_2} \ell_p$ and show that it is greater than 1. We have
$$
(u\oplus v) (\tilde e_1\otimes e_2+ \tilde e_2\otimes e_2) = \tilde e_1\otimes e_1 + \tilde  e_2\otimes e_2.
$$
Obviously, $\|\tilde e_1\otimes e_2+\tilde e_2\otimes e_2\| = \|(\tilde e_1+\tilde e_2)\otimes e_2\| = \sqrt2$.
Let us now estimate the norm of $\tau = \tilde e_1\otimes e_1+\tilde e_2\otimes e_2$ in $E=\ell_{2,2}\otimes_{g_2} \ell_p$. This space is \cite[p.142]{ryan} isomorphic to $(\ell_{2,2}\otimes_{d_2} \ell_{p'})^*$, but also to the space of 2-summing (Hilbert-Schmidt) operators $\cal P_2 (\ell_{2,2}, \ell_p)$ from $\ell_{2,2}$ to $\ell_p$, and the operator corresponding to $\tau$ is
$$
T: y_1\tilde e_1+y_2\tilde e_2 \mapsto y_1 e_1 + y_2 e_2,
$$
that is, the identical embedding of $\ell_{2,2}$ into $\ell_p$. Its Hilbert-Schmidt norm is the least constant $C$ such that for every weakly summable sequence $(x_n)\subset \ell_{2,2}$
$$
\sum_n \|Tx_n\|_p^2 \le C^2 \sup_\phi \sum_n |\phi(x_n)|^2,
$$
where $\phi=\phi_1 \tilde e_1+\phi_2 \tilde e_2\in \ell_{2,2}$ with $\|\phi\|_2 = 1$.
Set $x_1=\tilde e_1+\tilde e_2$, $x_2=\tilde e_1-\tilde e_2$, then
$\|Tx_1\|_p^2+\|Tx_2\|_p^2 = 2^{1+2/p}$, and since in $\ell_{2,2}$ the vectors $x_1$, $x_2$ are orthogonal of norm $\sqrt2$, for every $\phi$ we have
$|\phi(x_1)|^2+|\phi(x_2)|^2 = 2$. We obtain now
$2^{1+2/p} \le 2C^2$, and $C\ge 2^{1/p}$, that is, $\|\tau\|\ge 2^{1/p}$.

Returning to $u\oplus v$, we have
$$
\|u\oplus v\| \ge \|\tau\|/\|\tilde e_1\otimes e_2+\tilde e_2\otimes e_2\| \ge 2^{1/p-1/2} >1.
$$

For the norm $d_2$, we choose $p>2$ and the same $u,v,\tau$. We have still $\|(\tilde e_1+\tilde e_2)\otimes e_2\| = \|(u\oplus v)\tau\| = \sqrt2$. The norm of $\tau$ in $\ell_{2,2}\otimes_{d_2} \ell_p$ can be estimated as
\begin{align*}
\|\tau\| &= \frac12 \| (\tilde e_1+\tilde e_2)\otimes (e_1+e_2) + (\tilde e_1-\tilde e_2)\otimes (e_1-e_2)\|
\\& \le \frac12 \sup_{\|\phi\|_{\ell_{2,2}}=1} \big( |\phi(\tilde e_1+\tilde e_2)|^2 + |\phi(\tilde e_1-\tilde e_2)|^2 \big)^{1/2} \big( \|e_1+e_2\|_p^2 + \|e_1-e_2\|_p^2)^{1/2}
\\& = \frac12 \sup_{\|\phi\|_{\ell_{2,2}}=1} \big( 2 |\phi(\tilde e_1)|^2 + 2 |\phi(\tilde e_2)|^2 \big)^{1/2} \big( 2\cdot 2^{2/p})^{1/2} 
= 2^{-1 + 1/2 +1/2 + 1/p} = 2^{1/p}.
\end{align*}

Now $\|u\oplus v\| \ge \sqrt2 / \|\tau\| \ge 2^{1/2-1/p} >1$.

\end{document}